\newtheorem{thm}{Theorem}[section]
\newtheorem{lemma}[thm]{Lemma}
\newtheorem{prop}[thm]{Proposition}
\newtheorem{ques}[thm]{Question}
\newtheorem{cor}[thm]{Corollary}
\newtheorem{rem}[thm]{Remark}
\newcommand{\norm}[1]{\left\Vert #1\right\Vert}
\newcommand{\nnorm}[1]{\lvert\!|\!| #1|\!|\!\rvert}
\def \N {\mathbb N}
\def \C {\mathbb C}
\def \Z {\mathbb Z}
\def \R {\mathbb R}
\def \Q {\mathbb Q}
\def \E {\mathbb E}
\def\B {\mathcal B}
\def \X {\mathcal{X}}
\def \Y {\mathcal{Y}}
\def \ep {\epsilon}
\numberwithin{equation}{section}
\begin{document}

\title[]{Some ergodic theorems involving Omega function and their applications}

	\author[]{Rongzhong Xiao}		
	\address[Rongzhong Xiao]{School of Mathematical Sciences, University of Science and Technology of China, Hefei, Anhui, 230026, PR China \& Aix-Marseille Université, CNRS, Institut de Mathématiques de Marseille, Marseille, France}
	\email{xiaorz@mail.ustc.edu.cn}
	
	\subjclass[2020]{Primary: 37A30; Secondary: 37A44.}
	\keywords{Ergodic theorems, Multiple recurrence, Omega function, Polynomial Szemer\'edi theorem.}

\begin{abstract}
In this paper, we build some ergodic theorems involving function $\Omega$, where $\Omega(n)$ denotes the number of prime factors of a natural number $n$ counted with multiplicities. As a combinatorial application, it is shown that for any $k\in \N$ and every $A\subset \N$ with positive upper Banach density, there are $a,d\in \N$ such that $$a,a+d,\ldots,a+kd,a+\Omega(d)\in A.$$
\end{abstract}

\maketitle 
\section{Introduction}
Let $\Omega(n)$ denote the number of prime factors of a natural number $n$ counted with multiplicities. In mulplicative number theory, a central topic is to study the asymptotic distribution of the values of $\Omega(n)$. 

In 2022, Bergelson and Richter \cite{BR22} gave an asymptotic characterization of $\Omega(n)$ from a dynamical point of view. By {\em topological dynamical system}, we mean a pair $(X,T)$, where $X$ is a compact metric space and $T:X\to X$ is a homeomorphism. We say that $(X,T)$ is {\em uniquely ergodic} if there is only a $T$-invariant Borel probability measure on $X$. 
\begin{thm}\label{thm1-1}
	$($\cite[Theorem A]{BR22}$)$ Let $(X,T)$ be a uniquely ergodic topological dynamical system with unique $T$-invariant Borel probability measure $\mu$. Then for any $f\in C(X),x\in X$, 
	$$\lim_{N\to\infty}\frac{1}{N}\sum_{n=1}^{N}f(T^{\Omega(n)}x)=\int_{X}fd\mu.$$
\end{thm}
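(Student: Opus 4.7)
I would show that every weak$^*$ accumulation point $\nu$ of the empirical measures
$$\nu_N := \tfrac{1}{N}\sum_{n=1}^N \delta_{T^{\Omega(n)}x}$$
is $T$-invariant; unique ergodicity of $(X,T)$ then forces $\nu=\mu$, and hence $\nu_N\to\mu$ weak$^*$, which is exactly the claimed convergence.

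\textbf{Key identity.} Two ingredients combine to produce this. First, \emph{complete additivity of }$\Omega$: for every prime $p$, $\Omega(pm)=\Omega(m)+1$, so substituting $n=pm$ yields
$$\frac{1}{N}\sum_{\substack{n\le N\\ p\mid n}} f(T^{\Omega(n)}x) = \frac{1}{N}\sum_{m\le N/p}(f\circ T)(T^{\Omega(m)}x).$$
Second, the \emph{Tur\'an--Kubilius} estimate (combined with Mertens' formula $\sum_{p\le P}1/p=\log\log P + O(1)$): the additive function $\omega_P(n):=|\{p\le P:p\mid n\}|$ concentrates around its mean,
$$\sum_{n\le N}\bigl(\omega_P(n)-\log\log P\bigr)^2 \;\ll\; N\log\log P.$$
A Cauchy--Schwarz application with $f(T^{\Omega(n)}x)$ and $\omega_P(n)-\log\log P$, followed by swapping the orders of summation, produces the central identity
$$\frac{1}{N}\sum_{n\le N} f(T^{\Omega(n)}x) = \frac{1}{\log\log P}\sum_{p\le P}\frac{1}{p}\cdot\frac{1}{N/p}\sum_{m\le N/p}(f\circ T)(T^{\Omega(m)}x) + O\!\left(\frac{\|f\|_\infty}{\sqrt{\log\log P}}\right).$$

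\textbf{Deducing $T$-invariance.} Extracting by diagonalization a subsequence $(N_i)$ along which $\nu_{N_i}\to\nu$ weak$^*$ and, for each prime $p$, $\nu_{N_i/p}\to\nu^{(p)}$ weak$^*$, and letting $N\to\infty$ along $(N_i)$ first and $P\to\infty$ second in the identity gives
$$\int f\,d\nu = \lim_{P\to\infty}\frac{1}{\log\log P}\sum_{p\le P}\frac{1}{p}\int (f\circ T)\,d\nu^{(p)}.$$
Once one identifies the right-hand side with $\int f\circ T\,d\nu$, the $T$-invariance of $\nu$ follows and unique ergodicity closes the argument.

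\textbf{Main obstacle.} The last identification is the hardest step, since a priori $\nu^{(p)}\ne\nu$. Writing $a_N^{(g)} := \frac{1}{N}\sum g(T^{\Omega(n)}x)$, the identity at least implies the weak comparisons $\limsup_N a_N^{(f)}\le\limsup_N a_N^{(f\circ T)}$ and $\liminf_N a_N^{(f)}\ge\liminf_N a_N^{(f\circ T)}$, so $L_k := \limsup a^{(T^k f)}$ is non-decreasing and $l_k := \liminf a^{(T^k f)}$ non-increasing in $k\in\Z$. To pin down $L_0=l_0=\int f\,d\mu$, I would combine this monotonicity with the uniform Cesàro convergence
$$\frac{1}{K}\sum_{k=0}^{K-1} f\circ T^k \;\longrightarrow\; \int f\,d\mu\quad\text{uniformly on } X$$
furnished by unique ergodicity, applied in both the forward form $\frac{1}{K}\sum_{k=0}^{K-1} a_N^{(T^k f)}\to\int f\,d\mu$ and the backward form $\frac{1}{K}\sum_{k=-K+1}^{0} a_N^{(T^k f)}\to\int f\,d\mu$. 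The careful bookkeeping required to coordinate these two Cesàro sandwiches with the chains $(L_k)$ and $(l_k)$, so as to squeeze $L_0$ and $l_0$ to $\int f\,d\mu$ simultaneously, is the main technical burden of the proof.
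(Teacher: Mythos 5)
This theorem is imported from Bergelson--Richter \cite[Theorem A]{BR22}; the present paper gives no proof of it, so your proposal can only be compared with the original argument. Your general ingredients are the right ones and match that argument in spirit: the complete additivity $\Omega(pm)=\Omega(m)+1$, the Tur\'an--Kubilius inequality, and unique ergodicity, and your ``central identity'' is indeed a correct consequence of Cauchy--Schwarz plus Tur\'an--Kubilius (up to a harmless $o_{N\to\infty}(1)$ for fixed $P$).

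The problem is the step you defer as ``careful bookkeeping'': it is not bookkeeping but a genuine gap, because the facts you propose to use there --- the monotone chains $L_k\le L_{k+1}$, $l_k\ge l_{k+1}$ ($k\in\mathbb{Z}$), together with forward and backward uniform Ces\`aro convergence --- do \emph{not} imply $L_0=l_0=\int f\,d\mu$. Concretely, let $X=\mathbb{Z}\cup\{\infty\}$ be the one-point compactification of $\mathbb{Z}$, $T$ the shift $n\mapsto n+1$, $\infty\mapsto\infty$; this is uniquely ergodic with $\mu=\delta_\infty$, and both $T$ and $T^{-1}$ have uniformly convergent Birkhoff averages. Take probability measures $\nu_N=\delta_{k_N}$, where $(k_N)$ enumerates the nonpositive integers with every value occurring infinitely often. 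Then for \emph{every} $g\in C(X)$ one has $\limsup_N\int g\,d\nu_N=\sup_{j\le 0}g(j)\le\sup_{j\le 1}g(j)=\limsup_N\int g\circ T\,d\nu_N$, and the analogous inequality for $\liminf$; so all your weak comparisons hold (for $f=1_{\{0\}}$, continuous on this $X$, one gets $L_k=1$ for $k\ge 0$, $L_k=0$ for $k<0$, $l_k\equiv 0$, perfectly monotone), and both Ces\`aro sandwiches hold uniformly in $N$; yet $\int f\,d\nu_N\not\to\int f\,d\mu$ since $L_0=1\neq 0$. Indeed, the two sandwiches only ever yield the useless-direction bounds $l_0\le\int f\,d\mu\le L_0$ and $\lim_{k\to\infty}l_k\le\int f\,d\mu\le\lim_{k\to\infty}L_k$. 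In passing from your central identity to the $\limsup$/$\liminf$ comparisons you have discarded exactly the information that makes the theorem true: the identity couples, at the \emph{same} $N$, the average of $f$ with a convex combination (weights $\tfrac1{p\log\log P}$) of averages of $f\circ T$ at the scales $N/p$, and the actual proof in \cite{BR22} must exploit this coupling --- in effect, that the distribution of $\Omega(n)$, $n\le N$, spreads essentially evenly over blocks whose length tends to infinity --- by further number-theoretic work, rather than reduce to monotonicity of $L_k$ and $l_k$. As written, the final step of your proposal cannot be completed.
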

Later, Loyd \cite{L23} built an analogue of Theorem \ref{thm1-1} in the sense of norm convergence. By {\em measure preserving system}, we mean a tuple $(X,\X,\mu,T)$, where $(X,\X,\mu)$ is a Lebesgue space and $T:X\to X$ is an invertible measure preserving tansformation. We say that $(X,\X,\mu,T)$ is {\em ergodic} if for any $A\in \X$ with $\mu(A\Delta T^{-1}A)=0$, then $\mu(A)=0$ or $1$.
\begin{thm}\label{thm1-2}
	$($\cite[Theorem 2.5]{L23}$)$ Let $(X,\X,\mu,T)$ be an ergodic measure preserving system. Then for any $f\in L^{2}(\mu)$, 
	$$\lim_{N\to\infty}\norm{\frac{1}{N}\sum_{n=1}^{N}f(T^{\Omega(n)}x)-\int_{X}fd\mu}_{L^{2}(\mu)}=0.$$
\end{thm}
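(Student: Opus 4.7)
My plan is to deduce the $L^{2}$-norm convergence of Theorem \ref{thm1-2} from the pointwise statement of Theorem \ref{thm1-1} via a Jewett--Krieger style reduction to a uniquely ergodic topological model, followed by a density argument in $L^{2}(\mu)$.

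First, I would invoke the Jewett--Krieger theorem to replace the abstract ergodic Lebesgue system $(X,\X,\mu,T)$ by a measure-theoretically isomorphic uniquely ergodic topological dynamical system; since $L^{2}$-norms are preserved under this isomorphism, I may assume from the outset that $X$ is a compact metric space, $T:X\to X$ is a homeomorphism, and $\mu$ is the unique $T$-invariant Borel probability measure. Writing
$$A_{N}f(x) := \frac{1}{N}\sum_{n=1}^{N}f(T^{\Omega(n)}x),$$
the fact that every power $T^{\Omega(n)}$ is measure-preserving together with Jensen's (or Minkowski's) inequality yields the uniform bound $\|A_{N}\|_{L^{2}(\mu)\to L^{2}(\mu)}\le 1$.

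Next, I would establish the desired convergence on the dense subspace $C(X)\subset L^{2}(\mu)$: for any $g\in C(X)$, Theorem \ref{thm1-1} provides the pointwise convergence $A_{N}g(x)\to\int g\,d\mu$ for every $x\in X$, and since $|A_{N}g(x)|\le\|g\|_{\infty}$ while $\mu(X)=1$, the dominated convergence theorem upgrades this to $\norm{A_{N}g-\int g\,d\mu}_{L^{2}(\mu)}\to 0$.

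Finally, for general $f\in L^{2}(\mu)$ and any $\varepsilon>0$, I would choose $g\in C(X)$ with $\|f-g\|_{L^{2}}<\varepsilon$ and split
$$\norm{A_{N}f - \textstyle\int f\,d\mu}_{L^{2}} \le \norm{A_{N}(f-g)}_{L^{2}} + \norm{A_{N}g - \textstyle\int g\,d\mu}_{L^{2}} + \Bigl|\textstyle\int(g-f)\,d\mu\Bigr|;$$
the first and third terms are each at most $\varepsilon$ by the operator bound and Cauchy--Schwarz, while the middle term tends to $0$ by the previous step, giving $\limsup_{N}\norm{A_{N}f-\int f\,d\mu}_{L^{2}}\le 2\varepsilon$. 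The step I expect to be the real crux is the first one: the entire reduction to a soft density argument rests on the Jewett--Krieger theorem, which is needed both to make the hypotheses of Theorem \ref{thm1-1} available and to guarantee that $C(X)$ is dense in $L^{2}(\mu)$. Once this uniquely ergodic topological model is in hand, the rest of the proof requires no further ergodic or number-theoretic input.
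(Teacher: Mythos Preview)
The paper does not supply its own proof of Theorem \ref{thm1-2}; the result is simply cited from Loyd \cite[Theorem 2.5]{L23} as background, so there is nothing in the paper to compare your argument against directly. Your proof is correct and self-contained: the Jewett--Krieger model, the pointwise input from Theorem \ref{thm1-1}, dominated convergence on continuous functions, and the $L^{2}$-density/uniform-boundedness argument all combine exactly as you describe. It is worth noting that the same maneuver---passing from an abstract ergodic system to a uniquely ergodic topological model (via \cite[Theorem 15.27]{G-book}) in order to apply a pointwise theorem to continuous functions and then approximate in $L^{2}$---appears later in the paper's proof of Theorem \ref{TB}, so your approach is entirely in line with the techniques the paper itself employs.
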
 
In 2024, Charamaras \cite{D24} extended Loyd's result to double ergodic averages case.  
\begin{thm}\label{thm1-3}
	$($\cite[Corollary 1.33]{D24}$)$ Let $T,S$ be two invertible measure preserving transformations acting on Lebesgue space $(X,\X,\mu)$ such that $(X,\X,\mu,T)$ and $(X,\X,\mu,S)$ are ergodic. Then for any $f,g\in L^{2}(\mu)$, 
	$$\lim_{N\to\infty}\norm{\frac{1}{N}\sum_{n=1}^{N}f(T^{n}x)g(S^{\Omega(n)}x)-\int_{X}fd\mu\int_{X}gd\mu}_{L^{2}(\mu)}=0.$$
\end{thm}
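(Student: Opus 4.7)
The plan is to reduce the theorem to classical mean-value estimates for multiplicative functions via a two-step spectral reduction: first split $g$ along the Kronecker factor of $S$ to convert the hard case into a weighted ergodic theorem with weights $\lambda^{\Omega(n)}$, and then split $f$ along the Kronecker factor of $T$ to reduce the problem to Daboussi-type orthogonality. For the centering step, write $f=\bar f+\tilde f$ and $g=\bar g+\tilde g$ with $\bar f=\int f\,d\mu$ and $\bar g=\int g\,d\mu$. Expanding the product yields four averages: the constant $\bar f\bar g$ produces the desired right-hand side, the term $\bar f\cdot\frac{1}{N}\sum_n\tilde g(S^{\Omega(n)}x)$ vanishes in $L^2(\mu)$ by Theorem \ref{thm1-2} applied to $\tilde g$, and $\bar g\cdot\frac{1}{N}\sum_n\tilde f(T^n x)$ vanishes by the $L^2$ mean ergodic theorem. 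It therefore suffices to prove that, when $\int\tilde f\,d\mu=\int\tilde g\,d\mu=0$,
\[
\lim_{N\to\infty}\norm{\frac{1}{N}\sum_{n=1}^N\tilde f(T^n x)\tilde g(S^{\Omega(n)}x)}_{L^2(\mu)}=0.
\]

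Next, decompose $\tilde g=g_K+g_\perp$ relative to the Kronecker factor $K_S$ of $(X,\X,\mu,S)$, with $g_K$ in the closed span of the eigenfunctions of $S$ and $g_\perp\perp L^2(K_S)$. For the $g_K$ component, approximating in $L^2$ by finite linear combinations of eigenfunctions $\psi$ of $S$ with $S\psi=\lambda\psi$ reduces the problem to showing, for every $\lambda$ on the unit circle and every $\tilde f\in L^2(\mu)$ of zero mean,
\[
\lim_{N\to\infty}\norm{\frac{1}{N}\sum_{n=1}^N\lambda^{\Omega(n)}\tilde f(T^n x)}_{L^2(\mu)}=0.
\]
To establish this weighted ergodic theorem, apply a symmetric decomposition of $\tilde f$ relative to the Kronecker factor $K_T$ of $(X,\X,\mu,T)$. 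If $\tilde f\perp K_T$, expanding the squared $L^2$-norm and using $|\lambda|=1$ yields the upper bound $\frac{1}{N}\sum_{|k|<N}|\langle\tilde f\circ T^k,\tilde f\rangle|$, which tends to zero by Wiener's lemma since the spectral measure of $\tilde f$ under $T$ has no atoms. If $\tilde f$ is an eigenfunction with $T\tilde f=\alpha\tilde f$ and $\alpha\neq 1$, the average collapses to $\tilde f(x)\cdot\frac{1}{N}\sum_n\alpha^n\lambda^{\Omega(n)}$, which tends to zero by Daboussi's theorem on orthogonality of bounded multiplicative functions to Kronecker weights (for $\alpha$ irrational) and by Hal\'asz--Delange mean-value estimates in arithmetic progressions (for $\alpha$ a root of unity).

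For the $g_\perp$ component one must show $\frac{1}{N}\sum_n\tilde f(T^n x)g_\perp(S^{\Omega(n)}x)\to 0$ in $L^2(\mu)$. Loyd's proof of Theorem \ref{thm1-2} for the analogous unweighted statement rests on the spectral identity $\norm{\frac{1}{N}\sum_n g_\perp(S^{\Omega(n)}x)}_{L^2(\mu)}^2=\int_{\mathbb{T}}|\frac{1}{N}\sum_n z^{\Omega(n)}|^2\,d\sigma_{g_\perp}(z)$ combined with the Hal\'asz estimate $\frac{1}{N}\sum_n z^{\Omega(n)}\to 0$ for $z\neq 1$ and the absence of atoms in $\sigma_{g_\perp}$. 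This identity does not extend cleanly to the weighted situation, because $\tilde f(T^n x)$ and $g_\perp(S^{\Omega(n)}x)$ interact through the shared base point $x$. This is the main obstacle: since $T$ and $S$ are not assumed to commute, the natural van der Corput expansion of $\tilde f(T^n x)g_\perp(S^{\Omega(n)}x)$ does not factor (the substitution $y=T^n x$ produces $S^{\Omega(n)}T^{-n}y$ inside $g_\perp$), so the proof must develop a genuinely joint spectral argument combining the absence of atoms in $\sigma_{g_\perp}$ with the arithmetic equidistribution of $\Omega(n)$ against $T$-orbits.
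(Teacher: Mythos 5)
Your reductions are fine as far as they go (centering, the Kronecker part of $S$, and the weighted average $\frac1N\sum_n\lambda^{\Omega(n)}\tilde f(T^nx)$ via Wiener's lemma plus Daboussi/Hal\'asz--Delange), but the proposal does not prove the theorem: the component $g_\perp$ orthogonal to the Kronecker factor of $S$ is exactly where all the difficulty sits, and you explicitly leave it open, asserting that one "must develop a genuinely joint spectral argument" because $T$ and $S$ do not commute. That is a genuine gap, not a technicality, and the pessimistic diagnosis is also misleading: no joint spectral theory is needed.

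The missing idea --- the one underlying \cite[Corollary 1.33]{D24} and used repeatedly in this paper (Step III of the proof of Theorem \ref{TA}, and the proof of Theorem \ref{TB}) --- is the Tur\'an--Kubilius/K\'atai orthogonality criterion (Lemma \ref{lem2-1}) combined with complete additivity of $\Omega$, i.e.\ $\Omega(pn)=\Omega(n)+1$ for every prime $p$. The efficient decomposition is of $f$, not of $g$: write $f=\E_{\mu}(f|K_T)+\tilde f$ with $K_T$ the Kronecker factor of $T$. For $\tilde f\perp K_T$, first reduce to $|g|\equiv 1$ (for $0\le g\le 1$ write $g=\frac12\bigl((g+i\sqrt{1-g^2})+(g-i\sqrt{1-g^2})\bigr)$, as in the paper), set $A(n)=\tilde f(T^n\cdot)\,g(S^{\Omega(n)}\cdot)\in L^2(\mu)$, and compute for distinct primes $p,q$ that
\begin{equation*}
\langle A(pn),A(qn)\rangle=\int_X \tilde f(T^{pn}x)\,\overline{\tilde f(T^{qn}x)}\,(g\bar g)(S^{\Omega(n)+1}x)\,d\mu(x)=\langle T^{(p-q)n}\tilde f,\tilde f\rangle ,
\end{equation*}
so the dependence on $S$, $g$ and $\Omega$ cancels identically; no commutation of $T$ and $S$ enters. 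Since $\tilde f$ has continuous spectral measure, $\frac1N\sum_{n\le N}\langle T^{(p-q)n}\tilde f,\tilde f\rangle\to\sigma_{\tilde f}(\{z:z^{p-q}=1\})=0$, and Lemma \ref{lem2-1} gives $\frac1N\sum_{n\le N}A(n)\to0$ in $L^2(\mu)$ for \emph{every} bounded $g$ at once, so no splitting of $g$ along $K_S$ is needed. The remaining part of $f$ is a limit of eigenfunctions, for which the average collapses to $\frac1N\sum_n\alpha^n g(S^{\Omega(n)}x)$; this is handled by the spectral theorem in $S$ together with $\frac1N\sum_n\alpha^n z^{\Omega(n)}\to0$, or, as in this paper, by the Bergelson--Richter disjointness theorem (Theorem \ref{thm2-1}) applied in a uniquely ergodic model --- essentially the same number-theoretic input you invoke. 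Without the K\'atai-plus-$\Omega(pn)=\Omega(n)+1$ mechanism, your orthogonal case remains unproven and the theorem is not established.
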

For any $A\subset \Z^{k}$, we define $d^{*}(A)$ by letting 
$$d^{*}(A)=\sup_{{\Phi}}\limsup_{N\to\infty}\frac{|A\cap \Phi_N|}{|\Phi_N|},$$ where the supremum is taken over all F\o lner sequences\footnote{A F\o lner sequences of $\Z^k$ is a sequence $\{\Phi_N\}_{N\in \N}$ of finite subsets  of $\Z^k$ such that for each $h\in \Z^k$, $\displaystyle \lim_{N\to\infty}\frac{|\left(\Phi_N+h\right)\Delta\Phi_N|}{|\Phi_N|}=0$.} ${\Phi}=\{\Phi_N\}_{N\in \N}$ in $\Z^k$. If $d^{*}(A)>0$, we say that $A$ has {\em positive upper Banach density}.
As a combinatorial application of Theorem \ref{thm1-3}, Charamaras \cite[Corollary 1.37]{D24} showed that for any $E\subset \N$ with positive upper Banach density, there exist $m,n\in \N$ such that $m,m+n,m+\Omega(n)\in E$.

Motivated by the above results, in this paper, we consider the following ergodic averages:
\begin{equation}\label{eq1-1}
	\frac{1}{N}\sum_{n=1}^{N}w(n)\prod_{i=1}^{k}f_{i}(T_{i}^{P_{i}(n)}x)\cdot f_{k+1}(S^{\Omega(n)}x),
\end{equation}
where $S,T_1,\ldots,T_k$ is a family of invertible measure preserving tansformations acting on Lebesgue space $(X,\X,\mu)$, $P_1,\ldots,P_k\in \Z[n]$, $f_1,\ldots,f_{k+1}\in L^{\infty}(\mu)$, and $w:\N\to \C$ is a sequence.

When $T_1,\ldots,T_k$ generate a nilpotent group, and $f_{k+1}$ and $w$ are constant, the norm convergence of \eqref{eq1-1} was proved by Walsh in \cite{W12}.

Firstly, we extend Theorem \ref{thm1-1} to a weighted form. 
\begin{thm}\label{TA}
	Let $(X,\X,\mu,T)$ be a measure preserving system. Then for any $f\in L^{1}(\mu)$, there are a full measure subset $X_{f}$ of $X$ and $f^{*}\in L^{1}(\mu)$ such that for any $x\in X_{f}$, any uniquely ergodic topological dynamical system $(Y,S)$ with unique $S$-invariant Borel probability measure $\nu$, any $g\in C(Y)$, and any $y\in Y$, 
	$$\lim_{N\to\infty}\frac{1}{N}\sum_{n=1}^{N}f(T^{n}x)g(S^{\Omega(n)}y)=f^{*}(x)\int_{Y}gd\nu.$$
 \end{thm}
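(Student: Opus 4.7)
The plan is to combine Birkhoff's pointwise ergodic theorem with a Wiener--Wintner style strengthening of Theorem \ref{thm1-1}, in order to achieve uniformity over $(Y,S,g,y)$. First, by ergodic decomposition, I reduce to the case where $(X,\X,\mu,T)$ is ergodic, so that $f^{*}(x)=\int_{X}f\,d\mu$ for a.e.\ $x$; the general case then follows by integrating over the ergodic components. By the density of $L^{\infty}(\mu)$ in $L^{1}(\mu)$ and the uniform bound $|g(S^{\Omega(n)}y)|\le\|g\|_{\infty}$, I may further assume $f\in L^{\infty}(\mu)$. Writing $f=\int f\,d\mu+\tilde{f}$ with $\int\tilde{f}\,d\mu=0$ and handling the constant part via Theorem \ref{thm1-1}, the task reduces to showing that for mean-zero $\tilde{f}\in L^{\infty}(\mu)$, there is a full-measure set $X_{\tilde{f}}\subset X$ such that
\[
\lim_{N\to\infty}\frac{1}{N}\sum_{n=1}^{N}\tilde{f}(T^{n}x)g(S^{\Omega(n)}y)=0
\]
for every $x\in X_{\tilde{f}}$ and every admissible $(Y,S,g,y)$.

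Next, using the Jacobs--de Leeuw--Glicksberg decomposition in $L^{2}(\mu)$, I split $\tilde{f}=\tilde{f}_{\mathrm{Kr}}+\tilde{f}_{\mathrm{wm}}$ into a Kronecker (almost-periodic) part and a weakly mixing part, and treat each separately. For $\tilde{f}_{\mathrm{wm}}$, a van der Corput estimate in $L^{2}(\mu)$ combined with the trivial bound $|g(S^{\Omega(n)}y)\overline{g(S^{\Omega(n+h)}y)}|\le\|g\|_{\infty}^{2}$ yields
\[
\limsup_{N\to\infty}\Big\|\frac{1}{N}\sum_{n=1}^{N}\tilde{f}_{\mathrm{wm}}(T^{n}\cdot)\,g(S^{\Omega(n)}y)\Big\|_{L^{2}(\mu)}^{2}\lesssim \frac{\|g\|_{\infty}^{2}}{H}\sum_{h=1}^{H}\Big|\int\tilde{f}_{\mathrm{wm}}\cdot\overline{\tilde{f}_{\mathrm{wm}}\circ T^{h}}\,d\mu\Big|,
\]
which tends to $0$ as $H\to\infty$ by Wiener's theorem (the spectral measure of $\tilde{f}_{\mathrm{wm}}$ is continuous). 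A standard maximal inequality in $x$ (the maximal average is dominated by the Birkhoff maximal function of $|\tilde{f}_{\mathrm{wm}}|$, of weak type $(1,1)$) upgrades $L^{2}$-convergence to a.e.\ convergence, uniformly over all $(Y,S,g,y)$ with $\|g\|_{\infty}\le 1$. For $\tilde{f}_{\mathrm{Kr}}=\sum_{j}c_{j}\phi_{j}$ with $\phi_{j}\circ T=\lambda_{j}\phi_{j}$ and $\lambda_{j}\in S^{1}\setminus\{1\}$, each term of the average factors as $\phi_{j}(x)\cdot\frac{1}{N}\sum_{n=1}^{N}\lambda_{j}^{n}g(S^{\Omega(n)}y)$; its vanishing then follows from a twisted version of Theorem \ref{thm1-1}, namely
\[
\lim_{N\to\infty}\frac{1}{N}\sum_{n=1}^{N}\lambda^{n}g(S^{\Omega(n)}y)=0
\]
for every $\lambda\in S^{1}\setminus\{1\}$, every uniquely ergodic $(Y,S,\nu)$, $g\in C(Y)$ and $y\in Y$. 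This can be obtained by applying Theorem \ref{thm1-1} on the product system $(Y\times S^{1},S\times R_{\lambda})$ (which is uniquely ergodic when $\lambda$ is irrational), with rational $\lambda$ handled by restriction to arithmetic progressions.

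The main obstacle is producing a single full-measure set $X_{f}$ that works uniformly across all uniquely ergodic $(Y,S)$, all $g\in C(Y)$, and all $y\in Y$. The van der Corput step in the weakly mixing case already yields estimates uniform in these parameters (the dependence enters only through $\|g\|_{\infty}$), and the Kronecker part needs only a countable intersection of full-measure sets indexed by the eigenvalues $\lambda_{j}$ of $T$. Uniformity is therefore required only in the twisted Wiener--Wintner statement displayed above; since the collection of all uniquely ergodic topological systems is a proper class, the standard countable-intersection trick does not apply. Overcoming this should require either embedding all uniquely ergodic $(Y,S)$ into a suitable universal system, or establishing the twisted convergence unconditionally with bounds depending only on $\|g\|_{\infty}$ --- and this is where I expect the bulk of the technical work to lie.
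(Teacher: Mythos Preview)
Your proposal has a genuine gap in the weakly mixing step. The claim that a maximal inequality ``upgrades $L^{2}$-convergence to a.e.\ convergence'' is simply false: domination of $\sup_{N}|A_{N}f(x)|$ by a weak-type maximal function, together with $\|A_{N}f\|_{L^{2}}\to 0$, does \emph{not} imply $A_{N}f(x)\to 0$ a.e.\ (the maximal inequality lets you transfer a.e.\ convergence from a dense subclass of functions, but you have no dense subclass of weakly mixing functions on which pointwise convergence is already known). Nor does your pointwise van der Corput idea rescue this: the differenced term $g(S^{\Omega(n+h)}y)\overline{g(S^{\Omega(n)}y)}$ is not constant in $n$, so no pointwise correlation of the type $\frac{1}{N}\sum_{n}\tilde f_{\mathrm{wm}}(T^{n+h}x)\overline{\tilde f_{\mathrm{wm}}(T^{n}x)}$ emerges. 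A smaller issue: your product-system argument for the twisted Kronecker case is also incorrect, since $(Y\times S^{1},S\times R_{\lambda})$ need not be uniquely ergodic (take $Y=S^{1}$, $S=R_{\lambda}$); the correct input here is the disjointness result of Bergelson--Richter (Theorem~\ref{thm2-1} with $b_{n}=\lambda^{n}$), which already gives the twisted statement unconditionally.

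The paper's route for the weakly mixing part is entirely different and bypasses van der Corput. First reduce to $|g|\equiv 1$ by writing any $0\le g\le 1$ as $\tfrac{1}{2}\bigl((g+i\sqrt{1-g^{2}})+(g-i\sqrt{1-g^{2}})\bigr)$. Then apply the K\'atai--Bourgain--Sarnak--Ziegler orthogonality criterion (Lemma~\ref{lem2-1}) to $A(n)=\tilde f(T^{n}x)g(S^{\Omega(n)}y)$: for distinct primes $p,q$ one has $\Omega(pn)=\Omega(qn)=\Omega(n)+1$, so $g(S^{\Omega(pn)}y)\overline{g(S^{\Omega(qn)}y)}=|g(S^{\Omega(n)+1}y)|^{2}=1$, and the correlation collapses to $\frac{1}{N}\sum_{n}\tilde f(T^{pn}x)\overline{\tilde f(T^{qn}x)}$. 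The pointwise vanishing of this for a.e.\ $x$ is exactly Bourgain's double recurrence theorem (Theorem~\ref{thm3-1}), and since only countably many prime pairs are involved, a single full-measure set suffices --- uniformly over all $(Y,S,g,y)$. Contrary to your diagnosis, the hard analytic input lies here (Bourgain's theorem), not in the twisted Wiener--Wintner statement.
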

\begin{rem}
	a. Let $\mathbb{P}$ be the set of all prime numbers. Given $k\in \N$, let $P_1,\ldots,P_k\in \Z[n]$. Assume that $(X,\X,\mu,T)$ suffices the following property: There is  $\mathcal{P}\subset \mathbb{P}$ with positive relative density\footnote{We say that $\mathcal{P}\subset \mathbb{P}$ has positive relative density if $\displaystyle \lim_{N\to\infty}\frac{|\{1\le n\le N:n\in\mathcal{P}\}|}{|\{1\le n\le N:n\in\mathbb{P}\}|}>0.$} such that for any distinct $p,q\in \mathcal{P}\cup \{1\}$ and any $g_1,\ldots,g_{2k}\in L^{\infty}(\mu)$, the limit 
	$$\lim_{N\to\infty}\frac{1}{N}\sum_{n=1}^{N}\prod_{i=1}^{k}T^{P_{i}(pn)}g_i\cdot \prod_{j=1}^{k}T^{P_{j}(qn)}g_{k+j}$$ exists for $\mu$-a.e. $x\in X$. Then by the method used in the proof of Theorem \ref{TA}, Theorem \ref{thm4-1},  \cite[Theorem 16.10]{HK-book} and Theorem \ref{thm2-1}, we have that for any $f_1,\ldots,f_{k}\in L^{\infty}(\mu)$, there are a full measure subset $X_{f_1,\ldots,f_k}$ of $X$ and $f^{*}\in L^{\infty}(\mu)$ such that for any $x\in X_{f_1,\ldots,f_k}$, any uniquely ergodic topological dynamical system $(Y,S)$ with unique $S$-invariant Borel probability measure $\nu$, any $g\in C(Y)$, and any $y\in Y$, 
	$$\lim_{N\to\infty}\frac{1}{N}\sum_{n=1}^{N}\prod_{i=1}^{k}f_{i}(T^{P_{i}(n)}x)\cdot g(S^{\Omega(n)}y)=f^{*}(x)\int_{Y}gd\nu.$$
	
	b. By \cite[Theorem 1.2]{L23}, in Theorem \ref{TA}, when there is no continuous restriction for $g$, it may fail even for $\nu$-a.e. $y\in Y$.
\end{rem}
Next, we introduce some applications of Theorem \ref{TA}. After applying Theorem \ref{TA} to {\em rotations on torus}, we can get the following corollary.
\begin{cor}\label{corA}
	Let $\alpha>0,\beta\in \R$ with $\alpha+\beta\ge 1$. Let $(Y,S)$ be a uniquely ergodic topological dynamical system with unique $S$-invariant Borel probability measure $\nu$. Then for any $g\in C(Y)$ and any $y\in Y$, 
	$$\lim_{N\to\infty}\frac{1}{N}\sum_{n=1}^{N}g(S^{\Omega([\alpha n +\beta])}y)=\int_{Y}gd\nu,$$ where for any $x\in \R$, $[x]$ is the largest integer such that $0\le x-[x]<1$. 
\end{cor}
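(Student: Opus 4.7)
The strategy is to reduce the assertion to Theorem \ref{TA} applied to a torus (or finite) rotation, by grouping the sum according to the value $m:=[\alpha n+\beta]$ and expressing the multiplicity $w(m):=|\{n\in\N:[\alpha n+\beta]=m\}|$ in terms of the orbit of a rotation. Split into the cases $\alpha\notin\Q$ and $\alpha\in\Q$.

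\emph{Irrational $\alpha$.} Write $1/\alpha=\ell+\delta$ with $\ell:=[1/\alpha]\geq 0$ and $\delta\in(0,1)$, let $R:\mathbb{T}\to\mathbb{T}$ be rotation by $1/\alpha$, and set $x_0:=-\beta/\alpha\pmod{1}$, $J:=[1-\delta,1)$. A short computation counting integers in the interval $[(m-\beta)/\alpha,(m+1-\beta)/\alpha)$ of length $1/\alpha$ gives $w(m)=\ell+\mathbf{1}_J(R^m x_0)$, so with $M_N:=[\alpha N+\beta]\sim\alpha N$,
\begin{equation*}
\frac{1}{N}\sum_{n=1}^N g(S^{\Omega([\alpha n+\beta])}y)=\frac{1}{N}\sum_{m\leq M_N}\bigl(\ell+\mathbf{1}_J(R^m x_0)\bigr)g(S^{\Omega(m)}y)+o(1).
\end{equation*}
The $\ell$-part tends to $\alpha\ell\int g\,d\nu$ by Theorem \ref{thm1-1} applied to $(Y,S)$. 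For the $\mathbf{1}_J$-part, apply Theorem \ref{TA} with $(X,\mathcal{X},\mu,T)=(\mathbb{T},\mathrm{Borel},\mathrm{Leb},R)$ and $f=\mathbf{1}_J$; ergodicity of $R$ forces $f^{*}\equiv\int f\,d\mu=\delta$, yielding a contribution of $\alpha\delta\int g\,d\nu$. Summing gives $\alpha(\ell+\delta)\int g\,d\nu=\int g\,d\nu$.

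\emph{Rational $\alpha=p/q$.} Writing $n=qk+r$ with $0\leq r<q$ yields $[\alpha n+\beta]=pk+c_r$ for suitable integers $c_r$, reducing the claim to $\lim_M \frac{1}{M}\sum_{k=1}^M g(S^{\Omega(pk+c)}y)=\int g\,d\nu$ for each $c\in\Z$. This follows from Theorem \ref{TA} applied to the finite uniquely ergodic system $(\Z/p\Z,+1)$ with $f$ the indicator of $\{c\bmod p\}$: since the phase space is finite, the a.e.\ conclusion of Theorem \ref{TA} is automatic at every starting point, and re-indexing via $n=pk+(c\bmod p)$ delivers the required limit.

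\emph{Main obstacle.} Theorem \ref{TA} delivers convergence only for $\mu$-a.e.\ starting point, whereas the irrational case uses it at the specific $x_0=-\beta/\alpha$, which need not belong to the full-measure set produced by the theorem. I would overcome this by sandwiching $\mathbf{1}_J$ between continuous $f_\epsilon^{\pm}\in C(\mathbb{T})$ with $\int(f_\epsilon^{+}-f_\epsilon^{-})\,d\mu<\epsilon$, and showing that for continuous $f$ the conclusion of Theorem \ref{TA} holds at every $x\in\mathbb{T}$: a Fourier-expansion argument on the ergodic rotation reduces the task to showing $\frac{1}{N}\sum_{n=1}^N e^{2\pi ikn/\alpha}g(S^{\Omega(n)}y)\to 0$ for every nonzero $k\in\Z$ and every $y\in Y$, which can be obtained from Theorem \ref{thm1-2} applied to a product system combining $(Y,S)$ with a circle rotation factor. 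Letting $\epsilon\to 0$ transfers the conclusion to $x_0$ and completes the proof.
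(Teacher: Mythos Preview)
Your overall strategy---rewriting the sum over $n$ as a sum over $m$ weighted by an indicator along a torus rotation, then invoking Theorem~\ref{TA}---is exactly the paper's approach, and you correctly isolate the main obstacle (Theorem~\ref{TA} is only almost everywhere, while you need it at a specific point). However, your proposed fix for that obstacle has a genuine gap.

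\textbf{The gap.} You claim that $\frac{1}{N}\sum_{n=1}^{N}e^{2\pi i kn/\alpha}g(S^{\Omega(n)}y)\to 0$ for every nonzero $k$ and every $y\in Y$ ``can be obtained from Theorem~\ref{thm1-2} applied to a product system combining $(Y,S)$ with a circle rotation factor.'' This does not work: Theorem~\ref{thm1-2} concerns averages of the form $\frac{1}{N}\sum f(T^{\Omega(n)}x)$, and applying it to a product $R\times S$ would put $\Omega(n)$ in \emph{both} exponents, not $n$ in one and $\Omega(n)$ in the other. Moreover, Theorem~\ref{thm1-2} only yields $L^2$ convergence, whereas you need a pointwise statement valid at every $y$. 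The correct tool is Theorem~\ref{thm2-1}: the sequence $b_n=e^{2\pi i kn/\alpha}$ is a $1$-step nilsequence with $\lim_N\frac{1}{N}\sum_n b_n=0$ (since $k/\alpha$ is irrational), so Theorem~\ref{thm2-1} gives precisely the pointwise limit you need. With this correction, your sandwiching argument goes through.

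\textbf{Comparison with the paper.} The paper handles the a.e.\ obstacle more directly: rather than upgrading Theorem~\ref{TA} to an everywhere statement for continuous $f$ via Fourier analysis, it simply picks a point $x_\epsilon\in(0,\epsilon)$ lying in the full-measure set of Theorem~\ref{TA} and uses Weyl equidistribution to bound $\frac{1}{N}\sum_n|1_J(x_\epsilon+n/\alpha)-1_J(n/\alpha)|\le 2\epsilon$. Letting $\epsilon\to 0$ finishes the job in one line, with no need for Theorem~\ref{thm2-1} or any approximation of $1_J$. The paper also first reduces (via~\eqref{equ4-1}) to $\alpha>100$, so that each value $m$ is attained at most once and your multiplicity term $\ell$ vanishes; this avoids the separate treatment of the $\ell$-part. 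Your more general decomposition $w(m)=\ell+\mathbf 1_J(R^mx_0)$ is correct but unnecessary after that reduction. For rational $\alpha$, the paper invokes \cite[Corollary~1.16]{BR22} directly for averages along arithmetic progressions, which is essentially equivalent to your route through Theorem~\ref{TA} on $\Z/p\Z$.
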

\begin{rem}
	a. Let $k\in \N$ and $0\le r<k$. By applying Corollary \ref{corA} to $(\Z_{k}, S)$, $1_{\{r\}}$ and $y=0$, where $S:x\mapsto x+1$, we have that $$\lim_{N\to\infty}\frac{|\{1\le n\le N:k|\left(\Omega([\alpha n +\beta])-r\right)\}|}{N}=\frac{1}{k}.$$
	
	b.  By applying Corollary \ref{corA} to $(\Z_{2}, S)$, $g:\Z_{2}\to \{1,-1\},0\mapsto 1,1\mapsto -1$ and $y=0$, where $S:x\mapsto x+1$, we have that $$\lim_{N\to\infty}\frac{1}{N}\sum_{n=1}^{N}\mathbf{\lambda}([\alpha n +\beta])=0,$$ where $\mathbf{\lambda}$ is the {Liouville function}. i.e. $\mathbf{\lambda}:\N\to \{1,-1\},n\mapsto (-1)^{\Omega(n)}$.
\end{rem}
We go on applying Theorem \ref{TA} to {\em unipotent affine transformations} (Cf. \cite[Page 67-69]{F81}) to get the following weighted ergodic theorem, which can be viewed as an analogue of Theorem \ref{thm2-1}.
\begin{cor}\label{corB}
	 Let $k\in \N$ and $\alpha\in \R\backslash \Q$. Let $m$ be the Harr measure on $\mathbb{T}$. Then for any $f\in L^{1}(m)$, there is $A_{f}\subset \R^{k}$ with zero Lebesgue measure such that for any $(x_0,\ldots,x_{k-1})\notin A_{f}$, any uniquely ergodic topological dynamical system $(Y,S)$ with unique $S$-invariant Borel probability measure $\nu$, any $g\in C(Y)$, and any $y\in Y$, 
	 $$\lim_{N\to\infty}\frac{1}{N}\sum_{n=1}^{N}f(\alpha n^{k}+x_{k-1}n^{k-1}+\cdots+x_{1}n+x_0)g(S^{\Omega(n)}y)=\int_{\mathbb{T}}fdm\int_{Y}gd\nu.$$
\end{cor}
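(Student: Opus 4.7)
The plan is to realize the sequence $\alpha n^{k}+x_{k-1}n^{k-1}+\cdots+x_{0}\pmod{1}$ as the $(k-1)$-th coordinate of the orbit of a unipotent affine transformation of $\mathbb{T}^{k}$, and then invoke Theorem \ref{TA}.

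First I would consider the unipotent affine map
$$T:\mathbb{T}^{k}\to\mathbb{T}^{k},\qquad T(z_{0},\ldots,z_{k-1})=(z_{0}+k!\alpha,\,z_{1}+z_{0},\,\ldots,\,z_{k-1}+z_{k-2}).$$
Since $k!\alpha$ is irrational, Furstenberg's treatment of unipotent affine transformations of the torus (\cite[pp.~67--69]{F81}) guarantees that $T$ is uniquely ergodic with respect to the Haar measure $m^{\otimes k}$. A straightforward induction (using the hockey-stick identity $\sum_{m=0}^{n-1}\binom{m}{r}=\binom{n}{r+1}$) gives
$$z_{k-1}(n)=k!\alpha\binom{n}{k}+\sum_{j=0}^{k-1}z_{j}(0)\binom{n}{k-1-j},$$
a polynomial in $n$ of degree $k$ with leading coefficient $\alpha$, since $k!\binom{n}{k}$ is monic. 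Matching coefficients with $\alpha n^{k}+x_{k-1}n^{k-1}+\cdots+x_{0}$ in the monomial basis yields an affine $\R$-linear bijection $\Psi:\R^{k}\to\R^{k}$, $(x_{0},\ldots,x_{k-1})\mapsto(z_{0}(0),\ldots,z_{k-1}(0))$, represented by an upper-triangular matrix with nonzero diagonal; in particular $\Psi$ sends Lebesgue-null sets to Lebesgue-null sets.

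Next I would apply Theorem \ref{TA} to $(\mathbb{T}^{k},\mathcal{B}(\mathbb{T}^{k}),m^{\otimes k},T)$ and the lift $\tilde{f}(z_{0},\ldots,z_{k-1}):=f(z_{k-1})\in L^{1}(m^{\otimes k})$. Since $T$ is uniquely ergodic (hence ergodic), specializing the conclusion of Theorem \ref{TA} to $g\equiv 1$ together with Birkhoff's theorem forces $\tilde{f}^{*}\equiv \int_{\mathbb{T}}f\,dm$ almost everywhere. Theorem \ref{TA} therefore supplies a Haar-null Borel set $E\subset\mathbb{T}^{k}$ such that for every $(z_{0}(0),\ldots,z_{k-1}(0))\notin E$, every uniquely ergodic $(Y,S)$ with unique invariant measure $\nu$, every $g\in C(Y)$ and every $y\in Y$,
$$\lim_{N\to\infty}\frac{1}{N}\sum_{n=1}^{N}f(z_{k-1}(n))g(S^{\Omega(n)}y)=\int_{\mathbb{T}}f\,dm\int_{Y}g\,d\nu.$$

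Finally, I would transport $E$ back through $\Psi$. The preimage $\pi^{-1}(E)\subset\R^{k}$ under the projection $\pi:\R^{k}\to\mathbb{T}^{k}$ is a countable union of Haar-null translates, hence Lebesgue-null; setting $A_{f}:=\Psi^{-1}(\pi^{-1}(E))$, this is also Lebesgue-null in $\R^{k}$. By construction, for $(x_{0},\ldots,x_{k-1})\notin A_{f}$ we have $z_{k-1}(n)\equiv \alpha n^{k}+x_{k-1}n^{k-1}+\cdots+x_{0}\pmod{1}$, so the previous display becomes the identity claimed in Corollary \ref{corB}. The only genuine obstacle is the bookkeeping for $\Psi$ and the verification that the Haar-null exceptional set in $\mathbb{T}^{k}$ transfers correctly to a Lebesgue-null subset of $\R^{k}$; everything else is standard once the setup is in place.
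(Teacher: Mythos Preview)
Your proposal is correct and follows essentially the same route as the paper's proof: both realize the polynomial orbit as the last coordinate of the unipotent affine map $T_{k!\alpha}$ on $\mathbb{T}^{k}$, apply Theorem \ref{TA} to $f\circ\pi_{k}$, use unique ergodicity to identify $f^{*}$ with $\int_{\mathbb{T}}f\,dm$, and then pull the Haar-null exceptional set back through an invertible affine change of coordinates to obtain a Lebesgue-null $A_{f}\subset\R^{k}$. The only cosmetic difference is that the paper packages the change of basis via explicit matrices $B^{-1}C$ whereas you carry out the binomial-coefficient computation directly; the content is the same.
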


Now, let us come back to \eqref{eq1-1}. For its some cases, we can build related mean ergodic theorems, which can be viewed as some extensions of Theorem \ref{thm1-3}. 
\begin{thm}\label{TB}
	Given $k\in \N$, let $S,T_1,\ldots,T_{k}$ be a family of invertible measure preserving transformations acting on Lebesgue space $(X,\X,\mu)$. Let $P_1,\ldots,P_k$ be pairwise independent\footnote{$P_1,\ldots,P_k$ are pairwise independent if for any $1\le i<j\le k$, there are no non-zero $x,y\in \Z$ such that $xP_i+yP_j$ is a constant.} integer coefficients polynomials and any one of them is not of the form $cn^d+b$. Assume that $T_1,\ldots,T_k$ are commuting. Then for any $f_1,\ldots,f_{k+1}\in L^{\infty}(\mu)$, the limit 
	$$\lim_{N\to\infty}\frac{1}{N}\sum_{n=1}^{N}\prod_{i=1}^{k}f_{i}(T_{i}^{P_{i}(n)}x)\cdot f_{k+1}(S^{\Omega(n)}x)$$ exists in $L^{2}(\mu)$.
\end{thm}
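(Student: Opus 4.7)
The plan is to reduce the problem to Walsh's polynomial mean ergodic theorem \cite{W12} together with a multi-transformation polynomial variant of Theorem \ref{TA}. To this end, I first decompose
$$f_{k+1} = \mathbb{E}(f_{k+1}\,|\,\mathcal{I}_S) + h,$$
where $\mathcal{I}_S$ is the $\sigma$-algebra of $S$-invariant sets, so $\mathbb{E}(h\,|\,\mathcal{I}_S) = 0$. Since the first summand is $S$-invariant, its contribution to the average in \eqref{eq1-1} collapses to $\mathbb{E}(f_{k+1}\,|\,\mathcal{I}_S)(x)\cdot\frac{1}{N}\sum_{n=1}^{N}\prod_{i=1}^{k}f_{i}(T_{i}^{P_{i}(n)}x)$, whose $L^{2}(\mu)$-convergence is exactly Walsh's theorem applied to the commuting family $T_1,\ldots,T_k$ and polynomial iterates $P_i$.

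The remaining task is to show that the $h$-piece vanishes in $L^{2}(\mu)$:
$$\lim_{N\to\infty}\norm{\frac{1}{N}\sum_{n=1}^{N}\prod_{i=1}^{k}f_{i}(T_{i}^{P_{i}(n)}x)\cdot h(S^{\Omega(n)}x)}_{L^{2}(\mu)} = 0.$$
Via the ergodic decomposition $\mu = \int \nu_\omega\,d\pi(\omega)$ with respect to $S$, the Jewett--Krieger theorem furnishes for $\pi$-a.e.\ $\omega$ a uniquely ergodic topological model $(Y_\omega, S)$ of $(X,\nu_\omega, S)$, and $\int h\,d\nu_\omega = 0$. For continuous observables on the model, I apply the polynomial multi-transformation extension of Theorem \ref{TA} flagged in the Remark after it, using the weight $\prod_i f_i(T_i^{P_i(n)}x)$ in the role of $f(T^nx)$ and target system $(Y_\omega, S)$; this yields for $\mu$-a.e.\ $x$ the pointwise convergence to $f^*(x)\cdot\int h\,d\nu_\omega = 0$, which upgrades to $L^{2}(\mu)$-convergence by bounded convergence. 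An $L^{2}$-approximation of $h$ by continuous observables, combined with the uniform $L^\infty$-bound on the averages, then extends the vanishing to all $h$ with $\mathbb{E}(h\,|\,\mathcal{I}_S) = 0$.

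The main obstacle is justifying the polynomial multi-transformation extension of Theorem \ref{TA} invoked above: as stated, Theorem \ref{TA} handles only the linear single-transformation weight $f(T^nx)$, whereas we need the weight $\prod_i f_i(T_i^{P_i(n)}x)$ coming from commuting transformations and polynomial orbits. The Remark after Theorem \ref{TA} supplies exactly this, under the hypothesis that the polynomial averages converge pointwise along a positive-density subset of primes. The assumptions of Theorem \ref{TB}---commutativity of $T_1,\ldots,T_k$, pairwise independence of $P_1,\ldots,P_k$, and exclusion of the forms $cn^d+b$---are precisely what ensures this prime-polynomial pointwise convergence via Bourgain--Frantzikinakis-type polynomial pointwise ergodic theorems transferred to prime subsets, and verifying this transfer in the present setting is the technical core of the argument.
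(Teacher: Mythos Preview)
Your high-level decomposition $f_{k+1}=\E_\mu(f_{k+1}\,|\,\mathcal{I}(S))+h$ and the reduction of the first summand to Walsh's theorem are fine, and indeed the paper proves the equivalent identity \eqref{eq4-5}. The genuine gap is in your treatment of the $h$-piece.

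You propose to invoke the Remark after Theorem~\ref{TA}. First, that Remark is stated only for a \emph{single} transformation $T$, not for a commuting family $T_1,\ldots,T_k$; you are already extrapolating. Second and more seriously, the hypothesis of the Remark is \emph{pointwise} $\mu$-a.e.\ convergence of the averages
\[
\frac{1}{N}\sum_{n=1}^{N}\prod_{i=1}^{k}T_i^{P_i(pn)}g_i\cdot\prod_{j=1}^{k}T_j^{P_j(qn)}g_{k+j}
\]
for distinct primes $p,q$. For $k\ge 2$ commuting transformations and general polynomial iterates, such pointwise convergence is a notoriously open problem; nothing in the hypotheses of Theorem~\ref{TB} (commutativity, pairwise independence, exclusion of $cn^d+b$) delivers it, and there is no ``Bourgain--Frantzikinakis-type'' theorem that does. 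You flag this as ``the technical core'' but provide no argument, and none is currently available in the literature.

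The paper avoids this entirely by never leaving $L^2$. It applies the orthogonality criterion (Lemma~\ref{lem2-1}) in the Hilbert space $L^2(\mu)$ to $A(n)=\prod_i T_i^{P_i(n)}g_i\cdot S^{\Omega(n)}g_{k+1}$, after first reducing to $|g_{k+1}|\equiv 1$ so that in $\langle A(pn),A(qn)\rangle$ the $S$-factor becomes $(g_{k+1}\bar g_{k+1})(S^{\Omega(n)+1}x)\equiv 1$ and drops out. What remains is a purely polynomial average in the $T_i$'s, for which only \emph{mean} convergence is needed, supplied by Theorem~\ref{thm4-2}; the pairwise-independence and ``not $cn^d+b$'' assumptions are used exactly here to ensure that the dilated family $\{P_i(pn),P_i(qn)\}$ stays pairwise independent for all but finitely many prime pairs. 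This reduces to the $\mathcal{Z}_\infty(T_i)$-factors, after which the paper approximates by nilsequences (\cite[Theorem 16.10]{HK-book}) and invokes the nilsequence/$\Omega$ disjointness of Theorem~\ref{thm2-1} fibrewise over the ergodic decomposition for $S$, with Jewett--Krieger providing the uniquely ergodic model. No pointwise multiple ergodic theorem is ever required.
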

The reason why we make some restrictions on the polynomials $P_1,\ldots,P_k$ is to use the pronilfactors to control the $L^{2}$-norm of \eqref{eq1-1}. If $T_1=\cdots=T_k$, then we can leave out the restrictions for polynomials. That is,
\begin{thm}\label{TC}
	Given $k\in \N$, let $P_1,\ldots,P_k\in \Z[n]$. Let $T,S$ be two invertible measure preserving transformations acting on Lebesgue space $(X,\X,\mu)$. Then for any $f_1,\ldots,f_{k+1}\in L^{\infty}(\mu)$, the limit 
	$$\lim_{N\to\infty}\frac{1}{N}\sum_{n=1}^{N}\prod_{i=1}^{k}f_{i}(T^{P_{i}(n)}x)\cdot f_{k+1}(S^{\Omega(n)}x)$$ exists in $L^{2}(\mu)$.
\end{thm}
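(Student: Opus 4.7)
The plan is to split $f_{k+1}$ according to its relationship with the $S$-invariant $\sigma$-algebra, and to handle the two resulting pieces separately. Let $\mathcal I_S\subset\X$ denote the sub-$\sigma$-algebra of $S$-invariant sets, write $\tilde f=E(f_{k+1}\mid\mathcal I_S)$ and $h=f_{k+1}-\tilde f$, so that $h$ is orthogonal to $L^2(\mathcal I_S)$ in $L^2(\mu)$. Both summands are bounded, so by linearity it is enough to prove $L^2(\mu)$-convergence of the averages separately when $f_{k+1}=\tilde f$ and when $f_{k+1}=h$.

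For $f_{k+1}=\tilde f$ a standard null-set argument yields a representative satisfying $\tilde f(Sx)=\tilde f(x)$ identically, so $\tilde f(S^{\Omega(n)}x)=\tilde f(x)$ for every $n$ and $\mu$-a.e.\ $x$. The averages then collapse to
$$\tilde f(x)\cdot\frac{1}{N}\sum_{n=1}^{N}\prod_{i=1}^{k}f_{i}(T^{P_{i}(n)}x),$$
and the polynomial average converges in $L^{2}(\mu)$ by Walsh's theorem \cite{W12} applied to the (abelian) group generated by $T$. Multiplication by the bounded function $\tilde f$ preserves the $L^2$-limit.

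For $f_{k+1}=h$ with $h\perp L^2(\mathcal I_S)$ the target limit is $0$, and I would prove this by induction on the PET (Bergelson) complexity of the tuple $(P_1,\dots,P_k)$ combined with a van der Corput reduction in $L^{2}(\mu)$. Setting $u_n(x)=\prod_{i=1}^{k}f_{i}(T^{P_{i}(n)}x)\cdot h(S^{\Omega(n)}x)$, the van der Corput inequality controls $\|\frac{1}{N}\sum_{n=1}^N u_n\|_{L^2}^2$ by an average over shifts $m$ of correlations
$$\frac{1}{N}\sum_{n=1}^{N}\int \Bigl(\prod_{i=1}^k f_i(T^{P_i(n+m)}x)\overline{f_i(T^{P_i(n)}x)}\Bigr)\,h(S^{\Omega(n+m)}x)\overline{h(S^{\Omega(n)}x)}\,d\mu.$$
The $T$-polynomial factor is of strictly lower PET complexity in $n$, so the inductive hypothesis would control it provided the trailing $\Omega$-factor can be absorbed. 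The base case $k=0$ is $\frac{1}{N}\sum_{n=1}^{N}h(S^{\Omega(n)}x)\to 0$ in $L^{2}(\mu)$, which follows from Loyd's theorem (Theorem \ref{thm1-2}) applied on each ergodic component of $(X,\X,\mu,S)$ together with $h\perp\mathcal I_S$.

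The main obstacle is the surviving $\Omega$-difference term $h(S^{\Omega(n+m)}x)\overline{h(S^{\Omega(n)}x)}$: a naive van der Corput gains no cancellation, since $\Omega(n+m)-\Omega(n)=0$ on a positive-density set of $n$ for every fixed $m$. My plan to bypass this is to exploit the joint-equidistribution techniques underlying Theorem \ref{TA}: after enough inductive reductions the $T$-polynomial weight is brought to a single-iterate form $F(T^n x)$ with bounded $L^2$-convergent averages, at which point a Theorem \ref{TA}-style argument (applied to the $T$-action on $(X,\X,\mu)$ and to a uniquely ergodic topological model for an ergodic component of the $S$-action) evaluates the $\Omega$-correlation in closed form. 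The orthogonality $h\perp\mathcal I_S$ then forces the limit to vanish, closing the induction.
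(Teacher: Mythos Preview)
Your splitting $f_{k+1}=\tilde f+h$ according to $\mathcal I_S$ is the right endpoint (it matches the paper's identity \eqref{eq4-13}), and the $\tilde f$ case is fine. But the inductive scheme you propose for $h$ has a genuine gap, and it is precisely the one you flag: one step of additive van der Corput replaces the single factor $h(S^{\Omega(n)}x)$ by the product $h(S^{\Omega(n+m)}x)\overline{h(S^{\Omega(n)}x)}$, and this expression is \emph{not} of the form covered by your inductive hypothesis (which allows only one $S^{\Omega(\cdot)}$ factor). Further vdC steps only proliferate such factors, so ``after enough inductive reductions'' you would be facing a product $\prod_j h(S^{\Omega(n+m_j)}x)$ alongside a linear $T$-iterate, not a single $\Omega$-term. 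Theorem~\ref{TA} says nothing about such products, and the positive density of $\{n:\Omega(n+m)=\Omega(n)\}$ that you yourself note means these products carry no cancellation. Your proposed bypass via ``joint-equidistribution techniques underlying Theorem~\ref{TA}'' is therefore not a plan but a restatement of the obstacle.

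The paper avoids additive van der Corput entirely. The key identity is multiplicative: $\Omega(pn)=\Omega(n)+1$ for every prime $p$, so $S^{\Omega(pn)}=S^{\Omega(n)+1}$ depends on $n$ only through $\Omega(n)$. After reducing (as in \cite[Proof of Lemma~2.15]{D24}) to $|f_{k+1}|\equiv 1$, one applies the K\'atai--Bourgain--Sarnak--Ziegler orthogonality criterion (Lemma~\ref{lem2-1}), comparing $A(pn)$ with $A(qn)$ for distinct primes $p,q$; the $S$-part then collapses identically,
\[
f_{k+1}(S^{\Omega(pn)}x)\,\overline{f_{k+1}(S^{\Omega(qn)}x)}=|f_{k+1}|^{2}(S^{\Omega(n)+1}x)=1,
\]
leaving a pure polynomial correlation to which Theorem~\ref{thm4-1} applies. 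This shows that each $f_i$ with $1\le i\le k$ may be replaced by $\E_{\mu}(f_i\mid\mathcal Z_\infty(T))$. The remaining case is handled not by your $\mathcal I_S$-splitting of $f_{k+1}$ but by approximating these projections by functions whose $T$-orbits are nilsequences (\cite[Theorem~16.10]{HK-book}) and invoking Bergelson--Richter's disjointness result (Theorem~\ref{thm2-1}) on a uniquely ergodic model of each $S$-ergodic component. The prime trick, not PET, is what makes the $\Omega$-factor disappear.
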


Lastly, we apply Theorem \ref{TB} and Theorem \ref{TC} to search for some additive structures in the sets with positive upper Banach density. Finally, we can get the following results.
\begin{prop}\label{PA}
	Given $k\in \N$, let $P_1,\ldots,P_k$ be pairwise independent integer coefficients polynomials with zero constant terms and any one of them is not of the form $cn^d$. Then for every $A\subset \N^{k+1}$ with positive upper Banach density, there are $a\in \N^{k+1},d\in \N$ such that $$a,a+P_{1}(d)\vec{e}_{1},\ldots,a+P_{k}(d)\vec{e}_{k},a+\Omega(d)\vec{e}_{k+1}\in A,$$ where $\{\vec{e}_1,\ldots,\vec{e}_{k+1}\}$ is the standard basis of $\R^{k+1}$.
\end{prop}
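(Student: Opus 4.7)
My plan is to combine a multidimensional Furstenberg correspondence principle with Theorem \ref{TB} and the commuting polynomial Szemer\'edi theorem of Bergelson--Leibman, the key technical ingredient being a factorization of the $L^{2}$-limit that decouples the polynomial factors from the $\Omega$-factor.

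First, I would apply Furstenberg's correspondence principle for $\Z^{k+1}$ to $A\subset \N^{k+1}$ to obtain a Lebesgue probability space $(X,\X,\mu)$, commuting invertible $\mu$-preserving transformations $T_1,\ldots,T_{k+1}$ (with $T_i$ realizing the shift by $\vec{e}_i$), and a set $E\in\X$ with $\mu(E)\geq d^{*}(A)>0$, such that for every $d\in\N$,
\[
d^{*}\Bigl(A\cap\bigcap_{i=1}^{k}(A-P_i(d)\vec{e}_i)\cap(A-\Omega(d)\vec{e}_{k+1})\Bigr)\geq \mu\Bigl(E\cap\bigcap_{i=1}^{k}T_i^{-P_i(d)}E\cap T_{k+1}^{-\Omega(d)}E\Bigr).
\]
Setting $S:=T_{k+1}$, it therefore suffices to show
\[
\liminf_{N\to\infty}\int_{X}1_{E}(x)\,A_{N}(x)\,d\mu(x)>0,\quad A_{N}(x):=\frac{1}{N}\sum_{n=1}^{N}\prod_{i=1}^{k}1_{E}(T_{i}^{P_i(n)}x)\cdot 1_{E}(S^{\Omega(n)}x).
\]

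By Theorem \ref{TB}, $A_N\to A_\infty$ in $L^2(\mu)$ for some non-negative $A_\infty$; the main obstacle is to identify this limit well enough to ensure the target integral is strictly positive. I would perform the orthogonal splitting $1_{E}=\E[1_{E}\mid\I_{S}]+g_{0}$, where $\I_{S}$ is the $\sigma$-algebra of $S$-invariant sets. Because $\E[1_{E}\mid\I_{S}]$ is $S$-invariant, $A_N$ decomposes as
\[
\E[1_{E}\mid\I_{S}](x)\cdot\frac{1}{N}\sum_{n=1}^{N}\prod_{i=1}^{k}1_{E}(T_{i}^{P_i(n)}x)+\frac{1}{N}\sum_{n=1}^{N}\prod_{i=1}^{k}1_{E}(T_{i}^{P_i(n)}x)\cdot g_{0}(S^{\Omega(n)}x).
\]
The first Ces\`aro mean converges in $L^{2}$ to some non-negative $B(x)$ by Walsh's commuting polynomial mean ergodic theorem, and I expect the second to tend to $0$ in $L^{2}$: heuristically, $g_{0}$ has zero conditional expectation onto $\I_{S}$, so by Theorem \ref{thm1-2} applied on the ergodic components of $S$ the pure $\Omega$-average $\tfrac{1}{N}\sum_{n}g_{0}(S^{\Omega(n)}x)\to 0$, and this orthogonality should persist after multiplication by the uniformly bounded polynomial orbit via the pronilfactor / van der Corput analysis already underlying Theorem \ref{TB}. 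This decoupling is the technical heart of the argument.

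Granting the factorization $A_\infty=B\cdot\E[1_{E}\mid\I_{S}]$, since $\E[1_E\mid\I_S]>0$ $\mu$-a.e.\ on $E$, there is $c>0$ with $\mu(F)>0$ for $F:=E\cap\{\E[1_{E}\mid\I_{S}]\geq c\}$, and
\[
\int_{X}1_{E}\,A_{\infty}\,d\mu\;\geq\; c\int_{X}1_{F}\,B\,d\mu\;\geq\;c\liminf_{N\to\infty}\frac{1}{N}\sum_{n=1}^{N}\mu\Bigl(F\cap\bigcap_{i=1}^{k}T_{i}^{-P_{i}(n)}F\Bigr)>0,
\]
where the intermediate step uses $F\subseteq E$, and the last strict inequality is the commuting polynomial Szemer\'edi theorem of Bergelson--Leibman, applicable because $T_1,\ldots,T_k$ commute, $\mu(F)>0$, and each $P_i$ has zero constant term. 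Tracing back through the correspondence principle then produces the desired $a\in\N^{k+1}$ and $d\in\N$.
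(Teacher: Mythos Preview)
Your outline is correct and tracks the paper's argument closely. The factorization you ``grant'', namely that the $L^2$-limit of $A_N$ equals $\E_{\mu}(1_{E}\mid\mathcal{I}(S))\cdot B$, is exactly equation \eqref{eq4-5}, which the paper establishes in the course of proving Theorem \ref{TB} (via nilsequence approximation of the pronilfactor projections and Theorem \ref{thm2-1}); so your ``heuristic'' step is in fact justified by the paper, and you should cite \eqref{eq4-5} directly rather than re-argue it.

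The only substantive difference is in the endgame: the paper routes through Proposition \ref{prop5-1}, proving a \emph{uniform} lower bound $\tfrac{1}{16}c(\delta)^{3}$ with $c(\delta)$ coming from the quantitative polynomial Szemer\'edi theorem (Theorem \ref{thm5-2}), and to that end takes the level set of the function $G(y)=\int 1_A\cdot\bigl(\lim_N\tfrac{1}{N}\sum_n\prod_i\phi_{i,j_0}\circ T_i^{P_i(n)}\bigr)\,d\mu_y$ rather than of $\E_{\mu}(1_{E}\mid\mathcal{I}(S))$. Your level-set choice is simpler and suffices for the qualitative statement of Proposition \ref{PA}, but it does not yield the uniform constant depending only on $\mu(A)$ and $P_1,\ldots,P_k$ that the paper records; if you only need existence of $a,d$, your streamlining is a genuine simplification.
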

\begin{prop}\label{PB}
	Given $k\in \N$, let $P_1,\ldots,P_k\in \Z[n]$ with zero constant terms, then for every $A\subset \N^{2}$ with positive upper Banach density, there are $(x,y)\in \N^{2},d\in \N$ such that $$(x,y),(x+P_{1}(d),y),\ldots,(x+P_{k}(d),y),(x,y+\Omega(d))\in A.$$
\end{prop}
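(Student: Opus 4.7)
My strategy is to combine the Furstenberg correspondence principle for $\Z^{2}$, Theorem~\ref{TC}, and the Bergelson--Leibman polynomial Szemer\'edi theorem.

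Step one is the correspondence principle: from $A\subset \N^{2}$ with $d^{*}(A)>0$, I extract a pair of commuting invertible measure preserving transformations $T_{1},T_{2}$ on a probability space $(X,\X,\mu)$ together with a set $E\in\X$ of measure $\mu(E)\ge d^{*}(A)>0$, so that the entire combinatorial statement reduces to producing a single $d\in \N$ with
$$\mu\bigl(E\cap T_{1}^{-P_{1}(d)}E\cap\cdots\cap T_{1}^{-P_{k}(d)}E\cap T_{2}^{-\Omega(d)}E\bigr)>0.$$
Step two is existence of the limit: applying Theorem~\ref{TC} with $T=T_{1}$, $S=T_{2}$ and $f_{1}=\cdots=f_{k+1}=\mathbf{1}_{E}$ yields the $L^{2}(\mu)$-limit
$$L(x)=\lim_{N\to\infty}\frac{1}{N}\sum_{n=1}^{N}\prod_{i=1}^{k}\mathbf{1}_{E}(T_{1}^{P_{i}(n)}x)\cdot \mathbf{1}_{E}(T_{2}^{\Omega(n)}x),$$
and $\int_{E}L\,d\mu$ is the Ces\`aro limit of the correlations above; it therefore suffices to show $\int_{E}L\,d\mu>0$.

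The heart of the argument, and the main obstacle, is a decoupling step that separates the $\Omega$-weighted $T_{2}$-iterate from the polynomial $T_{1}$-iterates in the limit $L$. I would pass to a Host--Kra type characteristic factor for the polynomial averages of $T_{1}$, on which each orbit $\{T_{1}^{P_{i}(n)}x\}$ projects to a polynomial trajectory on a pronilsystem, and then use the unique ergodicity of these nilorbits together with Theorem~\ref{TA} (the pointwise weighted form of Bergelson--Richter) to collapse the $\Omega$-weighted factor $\mathbf{1}_{E}(T_{2}^{\Omega(n)}x)$ to its integral on each $T_{2}$-ergodic component. After $T_{2}$-ergodic decomposition, this would reduce $\int_{E}L\,d\mu$ to a positive constant multiple of
$$\int_{E}\Bigl(\lim_{N\to\infty}\frac{1}{N}\sum_{n=1}^{N}\prod_{i=1}^{k}\mathbf{1}_{E}(T_{1}^{P_{i}(n)}x)\Bigr)\,d\mu$$
on those ergodic components that meet $E$; the Bergelson--Leibman polynomial Szemer\'edi theorem, applied to the single transformation $T_{1}$ and the polynomials $P_{1},\ldots,P_{k}$ with zero constant terms, then supplies the required positivity. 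The decoupling is where the technology developed in the paper (Theorems~\ref{TA} and~\ref{TC}) is indispensable; everything else is classical correspondence-principle and Bergelson--Leibman input.
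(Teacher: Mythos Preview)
Your plan is correct and matches the paper's route: Furstenberg correspondence (Theorem~\ref{thm5-1}) reduces Proposition~\ref{PB} to Proposition~\ref{prop5-2}, which is then proved exactly as you outline, using the decoupling identity~\eqref{eq4-13} (a byproduct of the proof of Theorem~\ref{TC}), the characteristic factor Theorem~\ref{thm4-1}, and the quantitative polynomial Szemer\'edi theorem (Theorem~\ref{thm5-2}).

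Two points where your sketch is slightly loose compared with the paper. First, the disjointness input in the decoupling is Theorem~\ref{thm2-1} (nilsequence weights), not Theorem~\ref{TA}: after projecting each $\mathbf{1}_{E}$ onto $\mathcal{Z}_{\infty}(T_{1})$ and approximating via \cite[Theorem~16.10]{HK-book}, the weight $\prod_{i}\phi_{i}(T_{1}^{P_{i}(n)}x)$ is a nilsequence, not a linear orbit $f(T^{n}x)$. Second, the decoupling does not give ``a positive constant multiple'' of the polynomial average but rather the varying factor $\mu_{x}(E)$ coming from the $T_{2}$-ergodic decomposition; to close the lower bound the paper uses the pointwise inequality $G(y)\le \mu_{y}(E)$, where $G(y)=\int_{X}\mathbf{1}_{E}\cdot\lim_{N}\frac{1}{N}\sum_{n}\prod_{i}\phi_{i}(T_{1}^{P_{i}(n)}\cdot)\,d\mu_{y}$, and bounds $\int\mu_{y}(E)G(y)\,d\mu(y)\ge\int_{\{G>c(\delta)/2\}}G(y)^{2}\,d\mu(y)$, arriving at a bound of order $c(\delta)^{3}$.
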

\begin{prop}\label{PC}
	Given $k\in \N$, let $P_1,\ldots,P_k\in \Z[n]$ with zero constant terms, then for every $A\subset \N$ with positive upper Banach density, there are $a,d\in \N$ such that $$a,a+P_{1}(d),\ldots,a+P_{k}(d),a+\Omega(d)\in A.$$
\end{prop}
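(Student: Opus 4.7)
The plan is to invoke Furstenberg's correspondence principle to recast the problem as a positivity statement for an ergodic average, apply Theorem \ref{TC} for convergence, and then combine with the polynomial Szemer\'edi theorem of Bergelson and Leibman to harvest positivity. Given $A\subset\N$ with $d^{*}(A)>0$, the correspondence principle produces a measure-preserving system $(X,\X,\mu,T)$ and a set $B\in\X$ with $\mu(B)=d^{*}(A)>0$ such that for every $n_{1},\ldots,n_{k+1}\in\Z$,
$$d^{*}\Bigl(A\cap\bigcap_{i=1}^{k+1}(A-n_{i})\Bigr)\ge\mu\Bigl(B\cap\bigcap_{i=1}^{k+1}T^{-n_{i}}B\Bigr).$$
Specializing $n_{i}=P_{i}(d)$ for $1\le i\le k$ and $n_{k+1}=\Omega(d)$, it suffices to exhibit some $d\in\N$ with $\mu\bigl(B\cap T^{-P_{1}(d)}B\cap\cdots\cap T^{-P_{k}(d)}B\cap T^{-\Omega(d)}B\bigr)>0$; by ergodic decomposition I may further assume $(X,\X,\mu,T)$ is ergodic.

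By Theorem \ref{TC} applied with $S=T$ and $f_{1}=\cdots=f_{k+1}=1_{B}$, the average $\frac{1}{N}\sum_{n=1}^{N}\prod_{i=1}^{k}1_{B}\circ T^{P_{i}(n)}\cdot 1_{B}\circ T^{\Omega(n)}$ converges in $L^{2}(\mu)$ to some function $F$, so $\frac{1}{N}\sum_{n=1}^{N}\mu\bigl(B\cap\bigcap_{i=1}^{k}T^{-P_{i}(n)}B\cap T^{-\Omega(n)}B\bigr)\to\int_{X}1_{B}\,F\,d\mu$, and it suffices to show this integral is strictly positive. Writing $1_{B}=\mu(B)+\psi$ with $\int\psi\,d\mu=0$ and expanding the $T^{\Omega(n)}1_{B}$ factor decomposes the limit into a \emph{main term}
$$\mu(B)\cdot\lim_{N\to\infty}\frac{1}{N}\sum_{n=1}^{N}\mu\Bigl(B\cap\bigcap_{i=1}^{k}T^{-P_{i}(n)}B\Bigr),$$
which is strictly positive by the polynomial Szemer\'edi theorem of Bergelson and Leibman (the $P_{i}$ have zero constant terms and $\mu(B)>0$), together with a \emph{residual term} $\lim_{N\to\infty}\frac{1}{N}\sum_{n=1}^{N}\int_{X}1_{B}\prod_{i=1}^{k}1_{B}\circ T^{P_{i}(n)}\cdot\psi\circ T^{\Omega(n)}\,d\mu$ which I must show vanishes.

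The vanishing of the residual term is the principal obstacle. My plan is to reduce, via PET induction together with the machinery of Host--Kra characteristic factors, the $L^{2}$-norm of $\frac{1}{N}\sum_{n=1}^{N}\prod_{i=1}^{k}1_{B}\circ T^{P_{i}(n)}\cdot\psi\circ T^{\Omega(n)}$ to the analogous expression on an inverse limit of nilsystems; on each nilfactor the polynomial orbits equidistribute on subnilmanifolds and the factor is uniquely ergodic. There the Bergelson--Richter theorem (Theorem \ref{thm1-1}), or more conveniently the polynomial-weighted variant of Theorem \ref{TA} described in Remark 1.5(a) whose pointwise polynomial-prime hypothesis is known to hold on nilsystems, should force $\psi\circ T^{\Omega(n)}$ to decouple from any polynomial weight coming from the dynamics and to contribute $\int\psi\,d\mu=0$. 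This yields $F=\mu(B)\cdot G$, where $G$ is the polynomial-Szemer\'edi limit, whence $\int_{X}1_{B}\,F\,d\mu=\mu(B)\cdot L>0$ and the desired $d$ exists. The subtle interaction between the multiplicative function $\Omega$ and the nilpotent structure governing the polynomial orbits is where the bulk of the work lies.
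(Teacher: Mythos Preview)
Your strategy is essentially the paper's: Furstenberg correspondence, then positivity of the averaged intersection via Theorem~\ref{TC} and polynomial Szemer\'edi, with the $\Omega$-factor decoupling on the pronilfactor. Your reduction to the ergodic case is a legitimate simplification (the paper instead proves a uniform lower bound, Proposition~\ref{prop5-2}, without ergodicity, via a level-set argument with the ergodic decomposition for $S$). The decomposition $1_B=\mu(B)+\psi$ is exactly the ergodic special case of the identity~\eqref{eq4-13} established in the course of proving Theorem~\ref{TC}: the $L^2$-limit of the mixed average factors as $\bigl(\int g_{k+1}\,d\mu_x\bigr)\cdot(\text{polynomial limit})$, so under ergodicity $\int\psi\,d\mu_x=0$ and your residual term vanishes.

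Where your sketch is imprecise: the tool that pins the characteristic factor of the \emph{mixed} average (with the $\Omega$-twist still present) to $\mathcal{Z}_\infty(T)$ is not PET induction but the K\'atai--type orthogonality criterion (Lemma~\ref{lem2-1}). One replaces $n$ by $pn,qn$ for distinct primes; since $\Omega(pn)=\Omega(qn)=\Omega(n)+1$ the $\Omega$-factors cancel (after reducing to $|g_{k+1}|\equiv 1$), and then Theorem~\ref{thm4-1} controls the resulting pure polynomial average. After passing to $\mathcal{Z}_\infty(T)$ and approximating by functions whose orbits are nilsequences (Host--Kra structure theorem), you still cannot invoke Theorem~\ref{thm2-1} directly, because $\psi$ is only in $L^\infty$ and $(X,T)$ need not be a uniquely ergodic topological system; the paper handles this by taking a uniquely ergodic model (Jewett--Krieger) and approximating $\psi$ in $L^2$ by continuous functions there. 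With those two corrections your outline becomes the paper's proof of~\eqref{eq4-13}, and the rest of your argument goes through.
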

Conventionally, to prove the above results, we use Furstenberg's corresponding principle to transfer them into some multiple recurrence results, which can be deduced from Theorem \ref{TB}, Theorem \ref{TC}, and the polynomial Szemer\'edi theorem \cite[Theorem A]{BL96}.
\subsection*{Organization of the paper} In Section 2, we recall some notions and results. In Section 3, we prove Theorem \ref{TA}. In Section 4, we show Corollary \ref{corA} and Corollary \ref{corB}. In Section 5, we prove Theorem \ref{TB} and Theorem \ref{TC}. In Section 6, we show Proposition \ref{PA} - \ref{PC}.  In Section 7, we list some questions. 
\subsection*{Acknowledgements} The author is supported by National Natural Science Foundation of China (12371196, 123B2007). The initial ideas of the paper first arose in an online seminar held at the winter of 2023. The author's thanks go to Zhengxing Lian, who organized this seminar.
\section{Preliminaries}
\subsection{Isomorphism and factors}
We say that measure preserving systems $(X,\X,\mu,T)$ and $(Y,\Y,\nu,S)$ are {\em isomorphic} if there exists an invertibe measure preserving transformation $\Phi:X_0\to Y_0$ with $\Phi \circ T=S\circ\Phi$, where $X_0$ is a $T$-invariant full measure subset of $X$ and $Y_0$ is an $S$-invariant full measure subset of $Y$.

A {\em factor} of a measure preserving system $(X,\X,\mu,T)$ is a $T$-invariant sub-$\sigma$-algebra of $\X$. 
\subsection{Conditional expectation and ergodic decomposition}
Given a Lebesgue space $(X,\X,\mu)$, let $\mathcal{Y}$ be a sub-$\sigma$-algebra of $\X$. For any $f\in L^{1}(X,\X,\mu)$, the {\em conditional expectation of $f$ with respect to $\Y$} is the function $\E_{\mu}(f|\Y)$, defined in $L^{1}(X,\Y,\mu)$, such that for any $A\in \Y$, $\int_{A}fd\mu=\int_{A}\E_{\mu}(f|\Y)d\mu$. Then there exists a unique $\Y$-measurable map $X\to \mathcal{M}(X,\X),x\mapsto \mu_x$, called the {\em disintegration of $\mu$ with respect to $\Y$}, under neglecting $\mu$-null sets such that for any $f\in L^{\infty}(X,\X,\mu)$, $\E_{\mu}(f|\Y)(x)=\int_{X}fd\mu_{x}$ for $\mu$-a.e. $x\in X$, where $\mathcal{M}(X,\X)$ is the collection of probability measures on $(X,\X)$, endowed with standard Borel structure.

Let $(X,\X,\mu,T)$ be a measure preserving system and $\mathcal{I}(T)$ be the sub-$\sigma$-algebra of $\X$, generated by all $T$-invariant sets. The disintegration of $\mu$ with respect to $\mathcal{I}(T)$, denoted by $X\to \mathcal{M}(X,\X),x\mapsto \mu_x$, is called the {\em ergodic decomposition of $\mu$ with respect to $T$}. Then for $\mu$-a.e. $x\in X$, $(X,\X,\mu_x,T)$ is an ergodic measure preserving system.
\subsection{Nilsequences}
	Let $k\in\N$. A {\em $k$-step nilmanifold} $X$ is a quotient space $G/\Gamma$, where $G$ is a $k$-step nilpotent Lie group and $\Gamma$ is a cocompact discrete subgroup of $G$. A {\em basic $k$-step nilsequence} is the sequence $\{f(a^{n}\cdot x)\}_{n\in \Z}$, where $f\in C(X),a\in G,x\in X$. A {\em $k$-step nilsequence} is a uniform limit of basic $k$-step nilsequences. Clearly, all $k$-step nilsequences is an invariant  algebra of $l^{\infty}(\Z)$ under translation. (For more details, see \cite[Chapter 11]{HK-book}.). By \cite[Proposition 11.13]{HK-book}, we have that for any nilsequence $\{b_n\}_{n\in \Z}$, the limit $\displaystyle \lim_{N\to\infty}\frac{1}{N}\sum_{n=1}^{N}b_{n}$ exists.
	
	For nilsequences, Bergelson and Richter \cite{BR22} built a disjointness result on it.
	\begin{thm}\label{thm2-1}
		$($\cite[Corollary 1.28 and Lemma 6.3]{BR22}$)$ Let $(X,T)$ be a uniquely ergodic topological dynamical system with unique $T$-invariant Borel probability measure $\mu$. Let $\{b_n\}_{n\in \Z}$ be a nilsequence. Then for any $f\in C(X),x\in X$, 
		$$\lim_{N\to\infty}\frac{1}{N}\sum_{n=1}^{N}b_{n}f(T^{\Omega(n)}x)=\int_{X}fd\mu\cdot \lim_{N\to\infty}\frac{1}{N}\sum_{n=1}^{N}b_{n}.$$
	\end{thm}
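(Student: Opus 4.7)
The plan is to reduce to basic nilsequences and then establish a joint equidistribution statement. Since the algebra of basic nilsequences is uniformly dense in the space of all nilsequences, and the averaging operator $b\mapsto \frac{1}{N}\sum_{n=1}^{N}b_n f(T^{\Omega(n)}x)$ is bounded by $\|f\|_{\infty}$ uniformly in $N$ and $x$, a standard $3\varepsilon$-approximation reduces matters to the case $b_n=h(a^n\cdot z)$ for some nilmanifold $Y=G/\Gamma$, some $h\in C(Y)$, and some $a\in G$, $z\in Y$. I would then replace $Y$ by the orbit closure $Z:=\overline{\{a^n\cdot z:n\in\Z\}}$, which by Leibman's equidistribution theorem is a sub-nilmanifold on which the translation $R_a:y\mapsto a\cdot y$ acts uniquely ergodically with invariant probability $m_Z$. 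In particular $\lim_{N\to\infty}\frac{1}{N}\sum_{n=1}^{N}b_n=\int_{Z}h\,dm_Z$, so the claim reduces to the joint equidistribution
\begin{equation*}
\lim_{N\to\infty}\frac{1}{N}\sum_{n=1}^{N} h(a^n\cdot z)\,f(T^{\Omega(n)}x)=\int_{X}f\,d\mu \cdot \int_{Z}h\,dm_{Z}. \tag{$\ast$}
\end{equation*}

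The core step is to establish $(\ast)$. A naive attempt to apply Theorem \ref{thm1-1} to the product system $(X\times Z,\,T\times R_a)$ fails, because $(T\times R_a)^{\Omega(n)}(x,z)=(T^{\Omega(n)}x,\,a^{\Omega(n)}\cdot z)$ produces the ``wrong'' iterate $a^{\Omega(n)}\cdot z$ on the nilmanifold factor rather than the desired $a^n\cdot z$. Following the Bergelson--Richter strategy, I would instead exploit the complete additivity $\Omega(pn)=\Omega(n)+1$ (for $p$ prime) together with a K\'atai--Bourgain--Sarnak--Ziegler style orthogonality argument: substituting $n\mapsto pn$ for primes $p$ in a dyadic range and comparing the resulting shifted averages, one converts the problem to controlling correlations of the shifted nilsequences $h(a^{pn}\cdot z)$ along distinct prime pairs $p,q$. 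These correlations can be handled via Leibman's equidistribution theorem for polynomial orbits $\{a^{pn}\cdot z\}$ on sub-nilmanifolds, combined with a Matom\"aki--Radziwi\l\l\ type bound for multiplicative averages on short intervals.

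The main obstacle is precisely this orthogonality step: one must verify that the nilsequence weight $h(a^n\cdot z)$ introduces no bias into the Birkhoff averages along the $\Omega$-trajectory $(T^{\Omega(n)}x)_{n\ge 1}$. This delicate interplay between the completely additive arithmetic structure of $\Omega$ and the polynomial/nilpotent structure of the weight lies at the heart of the Bergelson--Richter argument; in particular, uniform continuity of $h$ is what allows the passage from $\Omega(n)$ to $\Omega(n)+1=\Omega(pn)$ to be absorbed into a vanishing error term, which in turn forces the limit in $(\ast)$ to factor as a product of the individual averages.
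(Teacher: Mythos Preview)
The paper does not prove this statement: Theorem~\ref{thm2-1} is quoted from \cite{BR22} (specifically, the paper attributes it to Corollary~1.28 and Lemma~6.3 there) and is used as a black box in the proofs of Theorems~\ref{TA}, \ref{TB}, and \ref{TC}. There is therefore no ``paper's own proof'' to compare against.

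Your sketch is a reasonable outline of how the Bergelson--Richter argument runs: the reduction to basic nilsequences and then to the orbit closure via Leibman's theorem is correct, and the core identity $(\ast)$ is indeed what one must establish. One small confusion in your final paragraph: the shift $\Omega(pn)=\Omega(n)+1$ acts on the $T$-iterate, not on the nilsequence argument, so what absorbs the ``$+1$'' is not uniform continuity of $h$ but rather the replacement of $f$ by $f\circ T$ (which is again continuous on $X$). The nilsequence side instead sees $a^{pn}\cdot z$ versus $a^{qn}\cdot z$, and the K\'atai/BSZ step requires showing that the correlation $\frac{1}{N}\sum_{n\le N} h(a^{pn}z)\overline{h(a^{qn}z)}\,|f|^2(T^{\Omega(n)+1}x)$ tends to the product of the separate limits; this is where Leibman-type equidistribution on the diagonal sub-nilmanifold and the short-interval input from Matom\"aki--Radziwi\l\l--Tao enter. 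Your proposal gestures at this but does not supply the actual mechanism, so as written it is a plausible roadmap rather than a proof.
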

\subsection{Pronilfactors}
Fix a measure preserving system $(X,\X,\mu,T)$. For any $f\in L^{\infty}(\mu)$, we let
$\nnorm{f}_{1}=\norm{\E_{\mu}(f|\mathcal{I}(T))}_{L^{2}(\mu)}$.
Next, we define $\nnorm{\cdot}_{k}$ inductively. For each $k\ge 1$ and any $f\in L^{\infty}(\mu)$, we let 
$$\nnorm{f}_{k+1}=\left(\lim_{H\to\infty}\frac{1}{H}\sum_{h=0}^{H-1}\nnorm{f\cdot T^{h}\bar{f}}_{k}^{2^k}\right)^{\frac{1}{2^{k+1}}}.$$ 

By \cite[Theorem 9.7]{HK-book}, there exists a factor $\mathcal{Z}_{k}(T)$ of $(X,\X,\mu,T)$, called the {\em $k$-step factor}, such that for any $f\in L^{\infty}(\mu)$, $\nnorm f_{k+1}=0$ if and only if $\mathbb{E}(f|\mathcal{Z}_{k}(T))=0$. By \cite[Equation (8.15)]{HK-book}, we know that for any $k\in\N$, $\mathcal{Z}_{k}(T)\subset \mathcal{Z}_{k+1}(T)$. So, we can define the factor $\mathcal{Z}_{\infty}(T)$ of $(X,\X,\mu,T)$, called the {\em $\infty$-step factor}, by letting it be the smallest $\sigma$-algebra containing $\displaystyle\bigcup_{k= 1}^{\infty}\mathcal{Z}_{k}(T)$. $($For more details, see \cite[Chapter 8,9,16]{HK-book}.$)$.
\subsection{An othogonality criterion} The following othogonality criterion (Cf. 
\cite[Lemma 1]{D75}, \cite[Equation (3.1)]{K86}, \cite[Theorem 2]{BSZ13}) is an application of Tur\'an-Kubilius inequality.
\begin{lemma}\label{lem2-1}
	$($\cite[Lemma 2.14]{D24}$)$ Let $\{A(n)\}_{n\ge 1}$ be a bounded sequence in Hilbert space $\mathcal{H}$. If $\mathcal{P}\subset \mathbb{P}$ with positive relative density such that for any distinct $p,q\in \mathcal{P}$, 
	$$\lim_{N\to\infty}\frac{1}{N}\sum_{n=1}^{N}\langle A(pn),A(qn)\rangle=0,$$ then $$\lim_{N\to\infty}\norm{\frac{1}{N}\sum_{n=1}^{N}A(n)}=0.$$
\end{lemma}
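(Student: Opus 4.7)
The plan is to reproduce the Bourgain--Sarnak--Ziegler orthogonality criterion (\cite{BSZ13}) in a Hilbert-space-valued form, the engine being the Tur\'an--Kubilius inequality. Without loss of generality I may assume $\|A(n)\|_{\mathcal{H}}\le 1$, and it suffices to show, for every $\epsilon>0$, that $\limsup_N\|\tfrac{1}{N}\sum_{n\le N}A(n)\|_{\mathcal{H}}\le\epsilon$.

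Since $\mathcal{P}$ has positive relative density in the primes, partial summation combined with the prime number theorem gives $\sum_{p\in\mathcal{P}}1/p=+\infty$, so one can select a finite $\mathcal{Q}\subset\mathcal{P}$ with both $S:=\sum_{p\in\mathcal{Q}}1/p$ and $J:=|\mathcal{Q}|$ as large as needed. For $\omega_{\mathcal{Q}}(n):=\#\{p\in\mathcal{Q}:p\mid n\}$, Tur\'an--Kubilius yields $\sum_{n\le N}(\omega_{\mathcal{Q}}(n)-S)^2\le CNS$ for $N$ large compared to $\max\mathcal{Q}$; combined with Cauchy--Schwarz and the triangle inequality in $\mathcal{H}$ this produces
\[
\Big\|\frac{1}{N}\sum_{n\le N}A(n)\;-\;\frac{1}{NS}\sum_{p\in\mathcal{Q}}\sum_{m\le N/p}A(pm)\Big\|_{\mathcal{H}}\;=\;O\!\left(S^{-1/2}\right),
\]
reducing the task to bounding the norm of the second average.

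For that, I would swap the order of summation and apply Cauchy--Schwarz in $m$: with $M:=\lfloor N/\max\mathcal{Q}\rfloor$, the portion with $m\le M$ equals $\sum_{m\le M}\sum_{p\in\mathcal{Q}}A(pm)$ and its norm is at most $\sqrt{M\sum_m\|\sum_p A(pm)\|^2}$. Expanding the inner norm squared yields a diagonal-in-$p$ part $\sum_p\sum_m\|A(pm)\|^2\le JM$ and a cross part $\sum_{p\ne q}\sum_m\langle A(pm),A(qm)\rangle$; the hypothesis makes each of the $J(J-1)$ cross sums $o(M)$, so the total is $JM+o_{\mathcal{Q}}(J^2M)$ as $M\to\infty$. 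Substituting back and using the standard estimates $\max\mathcal{Q}\gtrsim J\log J$ and $S\sim\log\log\max\mathcal{Q}$, the resulting normalized bound tends to $0$ as $J\to\infty$.

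The main obstacle will be controlling the boundary contribution from $m>M$, where the inner sum $\sum_{p\in\mathcal{Q},\,pm\le N}A(pm)$ has a $p$-dependent range and a crude bound loses all the savings accumulated above. The remedy I plan to use is to decompose $\mathcal{Q}$ into dyadic blocks on which $\max/\min$ of the primes is $O(1)$, run the preceding analysis on each block, and sum with the appropriate $S$-weights so that the boundary error in every block is negligible. Combining the Tur\'an--Kubilius error $O(S^{-1/2})$ with the Cauchy--Schwarz error of order $J^{-1/2}$-type, and then letting $J\to\infty$, finally forces $\|\tfrac{1}{N}\sum_{n\le N} A(n)\|_{\mathcal{H}}$ below $\epsilon$, completing the argument.
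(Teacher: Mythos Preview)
The paper does not supply its own proof of this lemma; it merely cites \cite[Lemma 2.14]{D24} and remarks that the underlying engine is the Tur\'an--Kubilius inequality. Your proposal follows exactly that classical K\'atai/Bourgain--Sarnak--Ziegler route, so in spirit it is the intended argument.

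There is, however, a genuine gap in the boundary-term step. On a truly dyadic block $\mathcal{Q}_k\subset[P_k,2P_k]$ the boundary range $M_k<m\le N/p$ has length comparable to $M_k$ for every $p$ near $P_k$; a short computation with Mertens' estimate shows that the boundary contribution in each block is a fixed positive fraction (roughly $2\log 2-1\approx 0.39$) of the main contribution, so after the $S_k/S$ weighting and summing over blocks you recover an error of size $O(1)$ rather than $o(1)$. The dyadic remedy therefore does not make the boundary negligible as claimed. The standard fix is simply not to truncate at $M$ at all: apply Cauchy--Schwarz over the full range $m\le N/\min\mathcal{Q}$, keeping the \emph{variable} inner summation $\{p\in\mathcal{Q}:pm\le N\}$, and then expand the square. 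The diagonal gives $\sum_{p}\sum_{m\le N/p}\|A(pm)\|^2\le NS$, each off-diagonal piece $\sum_{m\le N/\max(p,q)}\langle A(pm),A(qm)\rangle$ is $o_{\mathcal{Q}}(N)$ by hypothesis, and one obtains
\[
\Big\|\frac{1}{NS}\sum_{p\in\mathcal{Q}}\sum_{m\le N/p}A(pm)\Big\|
\ \le\ \frac{1}{NS}\sqrt{\frac{N}{\min\mathcal{Q}}}\,\sqrt{NS+o_{\mathcal{Q}}(N)}
\ =\ O\!\big(S^{-1/2}\big)+o_{\mathcal{Q}}(1),
\]
which combines with your Tur\'an--Kubilius step to finish the proof without any block decomposition.
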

\section{Proof Theorem \ref{TA}}
Firstly, let us recall Bourgain's double recurrence theorem. 
\begin{thm}\label{thm3-1}
	$($\cite[Main Theorem and Equation (2.3)]{B90}$)$ Let $(X,\X,\mu,T)$ be a measure preserving system. Fix two distinct non-zero integers $a$ and $b$. Fix $f,g\in L^{\infty}(\mu)$. If $f$ or $g$ is orthogonal to the closed subspace of $L^{2}(\mu)$ spanned by all eigenfunctions with respect to $T$, then $$\lim_{N\to\infty}\frac{1}{N}\sum_{n=1}^{N}T^{an}f\cdot T^{bn}g=0$$ for $\mu$-a.e. $x\in X$.
\end{thm}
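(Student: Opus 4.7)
The plan is to prove Bourgain's double recurrence theorem by the standard two‑step scheme for pointwise ergodic theorems applied to the bilinear averages
\[
A_{N}(f,g)(x):=\frac{1}{N}\sum_{n=1}^{N}f(T^{an}x)\,g(T^{bn}x).
\]
Specifically, I would establish (i) the $L^{2}(\mu)$ maximal inequality $\bigl\|\sup_{N\ge 1}|A_{N}(f,g)|\bigr\|_{L^{2}(\mu)}\le C(a,b)\,\|f\|_{L^{\infty}(\mu)}\,\|g\|_{L^{2}(\mu)}$, together with (ii) pointwise a.e.\ convergence of $A_{N}(f,g)$ to $0$ on a dense subfamily of pairs $(f,g)$ with $f\in L^{\infty}(\mu)$ orthogonal to the eigenfunctions of $T$ and $g\in L^{\infty}(\mu)$. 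The Banach principle then combines (i) and (ii) to yield a.e.\ convergence to $0$ for every such $(f,g)$, which is the desired conclusion.

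For step (ii), recall that $f$ being orthogonal to the eigenfunctions of $T$ is equivalent to its spectral measure $\sigma_{f}$ on $\mathbb{T}$ having no atoms, equivalently $\nnorm{f}_{2}=0$. On a dense subfamily of such $f$'s --- for instance those with sufficiently fast decay of the correlations $\langle T^{n}f,f\rangle$ --- a double van der Corput argument combined with the continuity of $\sigma_{f}$ forces $\|A_{N}(f,g)\|_{L^{2}(\mu)}\to 0$. Upgrading this $L^{2}$ decay to pointwise a.e.\ convergence on the dense subfamily is then obtained from the maximal inequality of step (i) via a standard square‑function / subsequence argument.

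The main difficulty is step (i). I would follow Bourgain's Fourier‑analytic route: by the Calder\'on transference principle it suffices to prove the analogous inequality on $\ell^{2}(\Z)$ for the bilinear operator $\widetilde A_{N}(F,G)(x)=\frac{1}{N}\sum_{n=1}^{N}F(x+an)G(x+bn)$. Passing to Fourier variables realises $\widetilde A_{N}$ as a bilinear multiplier with symbol
\[
m_{N}(\alpha,\beta)=\frac{1}{N}\sum_{n=1}^{N}e^{2\pi i n(a\alpha+b\beta)}.
\]
One decomposes $(\alpha,\beta)\in\mathbb{T}^{2}$ into major arcs (neighbourhoods of rationals with small denominator, where $m_{N}$ is well approximated by Fej\'er‑type kernels) and minor arcs (where Weyl‑type bounds give $|m_{N}|\le C_{a,b}\,N^{-\delta}$ for some $\delta=\delta(a,b)>0$). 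The major‑arc contribution is then dominated by iterated Hardy--Littlewood maximal functions along the arithmetic progressions $\{an\}$ and $\{bn\}$, while the minor‑arc contribution requires a careful dyadic decomposition in $N$ together with a Rademacher--Menshov square‑function argument to promote the pointwise $N^{-\delta}$ decay into a uniform $L^{2}$ bound on the supremum.

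The main obstacle is the minor‑arc maximal estimate and its uniform‑in‑$N$ promotion: this is the technical heart of Bourgain's original paper and requires delicate oscillatory‑sum bounds together with a Rademacher--Menshov trick whose constants depend on $a,b$ in an explicit but nontrivial way. Everything else --- the Banach principle combining (i) and (ii), the van der Corput / spectral reduction in (ii), and the transference to $\ell^{2}(\Z)$ in (i) --- is by now routine.
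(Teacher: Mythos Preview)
The paper does not prove this theorem; it is quoted from Bourgain's paper (Main Theorem and Equation~(2.3) of \cite{B90}) and used as a black box in the proof of Theorem~\ref{TA}. There is therefore no proof in the paper to compare your proposal against.

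On the substance of your outline: the overall two-step shape is reasonable, but your description of step~(i) misidentifies the mechanism. The bilinear symbol
\[
m_{N}(\alpha,\beta)=\frac{1}{N}\sum_{n=1}^{N}e^{2\pi i n(a\alpha+b\beta)}
\]
is a plain geometric sum in the single real variable $\theta=a\alpha+b\beta$; it carries no Diophantine structure, so there are no ``minor arcs'' on which a Weyl-type power saving $|m_{N}|\le C_{a,b}N^{-\delta}$ is available. The set where $m_{N}$ is large is the entire line $\{a\alpha+b\beta\equiv 0\}\subset\mathbb{T}^{2}$, not isolated neighbourhoods of rationals, and the circle-method decomposition you sketch simply does not apply to this problem. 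The minor-arc estimate you identify as ``the technical heart'' is therefore not an available tool here.

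Bourgain's actual route in \cite{B90} is different: rather than a maximal inequality combined with a dense class and the Banach principle, he proves an \emph{oscillation} inequality of the form
\[
\sum_{j<J}\norm{A_{N_{j+1}}(f,g)-A_{N_{j}}(f,g)}_{L^{2}(\mu)}^{2}=o(J)
\]
uniformly over increasing integer sequences $N_{1}<\cdots<N_{J}$, which together with the (easy) $L^{2}$-convergence already forces a.e.\ convergence directly. The oscillation bound is obtained by representing one of the functions through its spectral measure on $\mathbb{T}$, reducing matters to a uniform oscillation estimate for the scalar kernels $N^{-1}\sum_{n\le N}e^{2\pi i n\theta}$, and then performing a dyadic decomposition in the spectral variable $\theta$ (not a major/minor arc decomposition in~$\mathbb{T}^{2}$). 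Your step~(ii) also contains a gap: $L^{2}$-decay of $A_{N}$ together with a maximal inequality does not by itself yield a.e.\ convergence on the dense class; that implication already requires an oscillation-type input, which is precisely the hard part.
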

Now, we prove Theorem \ref{TA}.
\begin{proof}[Proof of Theorem \ref{TA}]
	The whole proof is divided into three steps.
	
	\medskip
	
	\noindent\textbf{Step I. Reduction to ergodic case.}
	
	\medskip
	
	In this step, we show that to verify that Theorem \ref{TA} holds, it suffices to prove that Theorem \ref{TA} holds for all ergodic measure preserving systems.
	
	Suppose that Theorem \ref{TA} holds for all ergodic measure preserving systems. Next, we use contradiction argument to verify that Theorem \ref{TA} holds. 
	
	If Theorem \ref{TA} does not hold, then there is a measure preserving system $(X,\X,\mu,T)$ and $f\in L^{1}(\mu)$ such that there is a measurable set $X'\subset X$ of positive $\mu$-measure such that for each $x\in X'$, there are a uniquely ergodic topological dynamical system $(Y_{x},S_{x})$ with unique $S_{x}$-invariant Borel probability measure $\nu_{x}$, $g_x\in C(Y)$, and $y_x\in Y$ such that
	\begin{equation}\label{eq3-1}
		\limsup_{N\to\infty}\left|\frac{1}{N}\sum_{n=1}^{N}f(T^{n}x)g_{x}(S_{x}^{\Omega(n)}y_{x})-f^{*}(x)\int_{Y_{x}}g_{x}d\nu_{x}\right|>0, 
	\end{equation}
	where $$f^{*}(x)=\lim_{N\to\infty}\frac{1}{N}\sum_{n=1}^{N}f(T^{n}x).$$ Let $X\to \mathcal{M}(X),z\to \mu_{z}$ be the ergodic decomposition of $\mu$ with respect to $T$. Then there is $z\in X$ such that the following hold:
	\begin{itemize}
		\item[(1)] $(X,\X,\mu_{z},T)$ is ergodic;
		\item[(2)] $f\in L^{1}(\mu_z)$;
		\item[(3)] $\mu_{z}(X')>0$.
	\end{itemize}
	Then by the hypothesis and Birkhoff's ergodic theorem, we know that there is a measurable set $X''\subset X$ of full $\mu_{z}$-measure such that for each $x\in X''\cap X'$ such that  
	\begin{equation}\label{eq3-2}
		\lim_{N\to\infty}\left|\frac{1}{N}\sum_{n=1}^{N}f(T^{n}x)g_{x}(S_{x}^{\Omega(n)}y_{x})-\int_{X}fd\mu_{z}\int_{Y_{x}}g_{x}d\nu_{x}\right|=0
	\end{equation}
	and $\displaystyle \int_{X}fd\mu_{z}=f^{*}(x)$.
	Then \eqref{eq3-2} contradicts with \eqref{eq3-1}. So, Theorem \ref{TA} can be reduced to ergodic case.
	
	\medskip
	
	\noindent\textbf{Step II. Reduction to $L^{\infty}$-functions.}
	
	\medskip
	
	Based on \textbf{Step I}, we fix an ergodic $(X,\X,\mu,T)$. In this step, we show that to verify that Theorem \ref{TA} holds for $(X,\X,\mu,T)$, it suffices to prove that Theorem \ref{TA} holds for all $h\in L^{\infty}(\mu)$.
	
	Suppose that Theorem \ref{TA} holds for all $h\in L^{\infty}(\mu)$. Fix $f\in L^{1}(\mu)$. Then there is a sequence of functions $\{f_j\}_{j\ge 1}$ such that the following hold:
	\begin{itemize}
		\item[(1)] for each $j\ge 1$, $\norm{f-f_j}_{L^{1}(\mu)}<1/2^{j}$;
		\item[(2)] for each $j\ge 1$, there is a full measure subset $X_{j}$ of $X$ such that Theorem \ref{TA} holds for $f_j$.
	\end{itemize}
	Let 
	\begin{align*}
		& X_0=\bigcap_{j\ge 1}X_{j}\cap \left\{x\in X:\lim_{N\to\infty}\frac{1}{N}\sum_{n=1}^{N}f(T^{n}x)=\int_{X}fd\mu\right\}\cap
		\\ & \hspace{2cm}
		\bigcap_{j\ge 1}\left\{x\in X:\lim_{N\to\infty}\frac{1}{N}\sum_{n=1}^{N}|f-f_j|(T^{n}x)=\norm{f-f_j}_{L^{1}(\mu)}\right\}.
	\end{align*}
	By Birkhoff's ergodic theorem, we have $\mu(X_0)=1$.
	
	Fix $x\in X_0$.  Fix a uniquely ergodic topological dynamical system $(Y,S)$ with unique $S$-invariant Borel probability measure $\nu$, $g\in C(Y)$, and $y\in Y$.
	Then 
	\begin{align*}
		& \limsup_{N\to\infty}\left|\frac{1}{N}\sum_{n=1}^{N}f(T^{n}x)g(S^{\Omega(n)}y)-\int_{X}fd\mu\int_{Y}gd\nu\right|
		\\ \le &
		\limsup_{j\to\infty}\limsup_{N\to\infty}\left|\frac{1}{N}\sum_{n=1}^{N}(f-f_j)(T^{n}x)g(S^{\Omega(n)}y)\right|+\limsup_{j\to\infty}\left|\int_{X}(f-f_j)d\mu\int_{Y}gd\nu\right|
		\\ \le &
		2\norm{g}_{L^{\infty}(\nu)}\limsup_{j\to\infty}\norm{f-f_j}_{L^{1}(\mu)}
		\\ = &
		0.
	\end{align*}
	So, Theorem \ref{TA} can be reduced to $L^{\infty}$-functions.
	
	\medskip
	
	\noindent \textbf{Step III. Proving Theorem \ref{TA} for ergodic case and $L^{\infty}$-functions.}
	
	\medskip
	
	Fix an ergodic $(X,\X,\mu,T)$ and $1$-bounded $f\in L^{\infty}(\mu)$. Let $Z$ be the sub-$\sigma$-algebra of $\X$ generated by all eigenfunctions with respect to $T$. Then we can write $f$ as $\E_{\mu}(f|Z)+\left(f-\E_{\mu}(f|Z)\right)$.
	
	By repeating the argument in \textbf{Step II} and linear property of ergodic averages, we know that to prove that for $\E_{\mu}(f|Z)$, Theorem \ref{TA} holds, it suffices to show that Theorem \ref{TA} holds for all eigenfunctions with respect to $T$. Fix a non-constant eigenfunction $h$ with eigenvalue $\lambda$. Clearly, $\lambda\in \mathbb{T}, \lambda\neq 1$, and $\displaystyle \int_{X}hd\mu=0$. Fix $x\in \{x\in X:h(T^{n}x)=\lambda^{n} h(x)\ \text{for each}\ n\ge 1\}\cap \{x\in X:|h(x)|<\infty\}$.
	Fix a uniquely ergodic topological dynamical system $(Y,S)$ with unique $S$-invariant Borel probability measure $\nu$, $g\in C(Y)$, and $y\in Y$. Then $$\frac{1}{N}\sum_{n=1}^{N}h(T^{n}x)g(S^{\Omega(n)}y)=h(x)\frac{1}{N}\sum_{n=1}^{N}\lambda^{n}g(S^{\Omega(n)}y).$$
	By \cite[Corollary 1.25]{BR22}, we have 
	$$\lim_{N\to\infty}\frac{1}{N}\sum_{n=1}^{N}h(T^{n}x)g(S^{\Omega(n)}y)=\int_{X}hd\mu\int_{Y}gd\nu.$$ So, for such $h$, Theorem \ref{TA} holds. 
	
	When $h$ is a constant, by Theorem \ref{thm1-1}, we know that Theorem \ref{TA} holds. To sum up, Theorem \ref{TA} holds for $\E_{\mu}(f|Z)$.
	
	Next, we prove that Theorem \ref{TA} holds for $\tilde{f}:=f-\E_{\mu}(f|Z)$. Clearly, $\tilde{f}$  is orthogonal to the closed subspace of $L^{2}(\mu)$ spanned by all eigenfunctions with respect to $T$ and $\displaystyle \int_{X}\tilde{f}d\mu=0$. 
	
	Let 
	\begin{align*}
		& \bar{X}=\left\{x\in X: \lim_{N\to\infty}\frac{1}{N}\sum_{n=1}^{N}\tilde{f}(T^{n}x)=0\right\}\cap
		\\ & \hspace{3cm}
		 \bigcap_{p,q\in \mathbb{P},\atop p\neq q}\left\{x\in X: \lim_{N\to\infty}\frac{1}{N}\sum_{n=1}^{N}\tilde{f}(T^{pn}x)\bar{\tilde{f}}(T^{qn}x)=0\right\}.
	\end{align*}
	By Birkhoff's ergodic theorem and Theorem \ref{thm3-1}, $\mu(\bar{X})=1$.
	
	Fix $x\in \bar{X}$.  Fix a uniquely ergodic topological dynamical system $(Y,S)$ with unique $S$-invariant Borel probability measure $\nu$, $g\in C(Y)$, and $y\in Y$. Note that when $0\le g\le 1$, $g$ can be written as 
	$$\frac{1}{2}\left(\left(g+i\sqrt{1-g^2}\right)+\left(g-i\sqrt{1-g^2}\right)\right).$$ So, by linear property of ergodic averages, we can assume that $|g|\equiv 1$. 
	
	Fix two distinct prime numbers $p$ and $q$. Then 
	\begin{equation}\label{eq3-3}
		\begin{split}
		& \lim_{N\to\infty}\frac{1}{N}\sum_{n=1}^{N}\tilde{f}(T^{pn}x)g(S^{\Omega(pn)}y)\cdot \bar{\tilde{f}}(T^{qn}x)\bar{g}(S^{\Omega(qn)}y)
		\\ = &
		\lim_{N\to\infty}\frac{1}{N}\sum_{n=1}^{N}\tilde{f}(T^{pn}x) \bar{\tilde{f}}(T^{qn}x)\left(g\cdot \bar{g}\right)(S^{\Omega(n)+1}y)
		\\ = &
		\lim_{N\to\infty}\frac{1}{N}\sum_{n=1}^{N}\tilde{f}(T^{pn}x) \bar{\tilde{f}}(T^{qn}x)
		\\ = &
		0.
		\end{split}
	\end{equation}
	By combining \eqref{eq3-3} and Lemma \ref{lem2-1}, we know that 
	$$\lim_{N\to\infty}\frac{1}{N}\sum_{n=1}^{N}\tilde{f}(T^{n}x)g(S^{\Omega(n)}y)=\int_{X}\tilde{f}d\mu\int_{Y}gd\nu.$$
	So, Theorem \ref{TA} holds for $\tilde{f}$. This finishes the whole proof.
\end{proof}
\section{Applications of Theorem \ref{TA}}
Firstly, we prove Corollary \ref{corA}.
\begin{proof}[Proof of Corollary \ref{corA}]
	Note that for any bounded sequence $\{a_n\}_{n\ge 1}\subset \C$ and each $k\in \N$, we have 
	\begin{equation}\label{equ4-1}
		\lim_{N\to \infty}\left|\frac{1}{N}\sum_{n=1}^{N}a_{n}-\frac{1}{N}\sum_{n=1}^{N}a_{n+k}\right|=\lim_{N\to \infty}\left|\frac{1}{N}\sum_{n=1}^{N}a_{n}-\frac{1}{k}\sum_{i=0}^{k-1}\frac{1}{[N/k]}\sum_{n=1}^{[N/k]}a_{kn+i}\right|=0.
	\end{equation}
	Then we can assume that $\beta >0$. 
	
	If $\alpha\in \Q$, then there are $q,p\in \N$ such that for each $0\le i<q$, there are $d_i\ge 0,0\le c_i<p$ such that for any $n\in \N$, $[\alpha(qn+i)+\beta]=p(n+d_i)+c_i$. By combining \eqref{equ4-1} and \cite[Corollary 1.16]{BR22}, we have that Corollary \ref{corA} holds.
	
	Now, fix $\alpha\in \R\backslash\Q$. By \eqref{equ4-1}, we can assume that $\alpha>100$ and for each $n\in\N$, $\alpha n+\beta\notin \N$. For any $t\in \R$, let $\{t\}=t-[t]$.
	Note that $m\in\{[\alpha n+\beta]:n\in \N\}$ if and only if $$\left\{\frac{m-\beta}{\alpha}\right\}\in \left(1-\frac{1}{\alpha},1\right).$$ Then there exists an open interval $A_{\alpha,\beta}$ of $\mathbb{T}$ with length $\alpha^{-1}$ such that $m\in\{[\alpha n+\beta]:n\in \N\}$ if and only if $m/\alpha\in A_{\alpha,\beta}$. 
	
	Fix a uniquely ergodic topological dynamical system $(Y,S)$ with unique $S$-invariant Borel probability measure $\nu$, $g\in C(Y)$ with $0\le g\le 1$ and $y\in Y$. Let $T:\mathbb{T}\to \mathbb{T},x\mapsto x+\alpha^{-1}$. Then $(\mathbb{T},T)$ is uniquely ergodic and the unique $T$-invariant Borel probability measure is the Harr measure $\mu$ on $\mathbb{T}$. Let $\mathcal{B}({\mathbb{T}})$ be the Borel $\sigma$-algebra on $\mathbb{T}$. After applying Theorem \ref{TA} to $(\mathbb{T},\mathcal{B}({\mathbb{T}}),\mu,T)$ and $1_{A_{\alpha,\beta}}$, we have that for any $\ep\in (0,(100\alpha)^{-1})$, there is $x_{\ep}\in (0,\ep)$ such that 
	\begin{equation}\label{equ4-2}
		\lim_{N\to \infty}\frac{1}{N}\sum_{n=1}^{N}1_{A_{\alpha,\beta}}(x_{\ep}+n/\alpha)g(S^{\Omega(n)}y)=\frac{1}{\alpha}\int_{Y}gd\nu.
	\end{equation}
	By Weyl's uniformly distribution theorem, we have that 
	\begin{equation}\label{equ4-3}
		\lim_{N\to \infty}\frac{1}{N}\sum_{n=1}^{N}\left|1_{A_{\alpha,\beta}}(x_{\ep}+n/\alpha)-1_{A_{\alpha,\beta}}(n/\alpha)\right|\le 2\ep.
	\end{equation}
	Note that 
	\begin{equation}\label{equ4-4}
		\frac{1}{N}\sum_{n=1}^{N}g(S^{\Omega([\alpha n+\beta])}y)=\frac{[\alpha N+\beta]}{N}\cdot \frac{1}{[\alpha N+\beta]}\sum_{n=1}^{[\alpha N+\beta]}1_{A_{\alpha,\beta}}(n/\alpha)g(S^{\Omega(n)}y).
	\end{equation}
	By combining \eqref{equ4-2} - \eqref{equ4-4}, we know that 
	$$\lim_{N\to \infty}\frac{1}{N}\sum_{n=1}^{N}g(S^{\Omega([\alpha n+\beta])}y)=\int_{Y}gd\nu.$$ This finishes the proof.
\end{proof}
Next, we show Corollary \ref{corB}.
\begin{proof}[Proof of Corollary \ref{corB}]
	For any $\beta\in \R$, we define $T_{\beta}:\mathbb{T}^{k}\to\mathbb{T}^{k}$ by putting 
	$$T(x_1,\ldots,x_k)=(x_1+\beta,x_2+x_1,\ldots,x_{k}+x_{k-1})$$ for any $(x_1,\ldots,x_k)\in \mathbb{T}^{k}$. By \cite[Corollary 4.22]{EW11}, when $\beta$ is irrational, $(\mathbb{T}^{k},T_{\beta})$ is uniquely ergodic and the unique $T$-invariant Borel probability measure is $m^{\otimes k}$, where $m^{\otimes k}=m\times \cdots \times m(k\ \text{times})$.
	
	Let $$
	C=\begin{pmatrix}
	1 & 0 & \cdots & 0 \\
	\binom{1}{0} & \binom{1}{1} &\cdots & 0 \\
	\vdots & \vdots & \ddots & \vdots \\
	\binom{k}{0} & \binom{k}{1} & \cdots & \binom{k}{k}
	\end{pmatrix},
	B=\begin{pmatrix}
	1& 0 &\cdots & 0\\
	1 & 1 & \cdots & 1 \\
	\vdots & \vdots & \ddots & \vdots \\
	1 & k & \cdots & k^{k}
	\end{pmatrix}.
	$$
	Let $\pi_{k}:\mathbb{T}^{k}\to \mathbb{T}$ be the $k$-th coordinate projection. Then when 
	\begin{equation}\label{equ4-5}
		(c_0,\ldots,c_k)^{T}=B^{-1}C(x_k,\ldots,x_0)^{T},
	\end{equation}
	we have that $k!c_{k}=x_0$ and for each $n\in\N$,
	\begin{equation}\label{equ4-6}
		\pi_{k}\left(T_{x_0}^{n}(x_1,\ldots,x_k)\right)=(c_kn^k+\cdots+c_{1}n+c_0)\mod 1.
	\end{equation} 
	Fix $f\in L^{1}(m)$. Let $\mathcal{B}({\mathbb{T}^{k}})$ be the Borel $\sigma$-algebra on $\mathbb{T}^{k}$. After applying Theorem \ref{TA} to $(\mathbb{T}^{k},\mathcal{B}({\mathbb{T}^{k}}),m^{\otimes k},T_{k!\alpha})$ and $f\circ \pi_{k}$, we know that there is an $m^{\otimes k}$-null subset $X_{f}$ of $\mathbb{T}^{k}$ such that for any $(x_1,\ldots,x_k)\notin X_{f}$, any uniquely ergodic topological dynamical system $(Y,S)$ with unique $S$-invariant Borel probability measure $\nu$, any $g\in C(Y)$ and any $y\in Y$,
	$$\lim_{N\to\infty}\frac{1}{N}\sum_{n=1}^{N}f(\pi_{k}\left(T_{k!\alpha}^{n}(x_1,\ldots,x_k)\right))g(S^{\Omega(n)}y)=\int_{\mathbb{T}}fdm\int_{Y}gd\nu.$$
	
	\cite[Theorem 2.44.a]{Fo99} tells us that the image of any zero Lebesgue measure subset of $\R^{k}$ under an invertible linear map is still of zero Lebesgue measure. Combining this, \eqref{equ4-5} and \eqref{equ4-6}, we can find a set $A_f\subset \R^{k}$ with zero Lebesgue measure such that for any $(c_0,\ldots,c_{k-1})\notin A_{f}$, any uniquely ergodic topological dynamical system $(Y,S)$ with unique $S$-invariant Borel probability measure $\nu$, any $g\in C(Y)$ and any $y\in Y$,
	$$\lim_{N\to\infty}\frac{1}{N}\sum_{n=1}^{N}f(\alpha n^k+c_{k-1}n^{k-1}+\cdots+c_{1}n+c_0)g(S^{\Omega(n)}y)=\int_{\mathbb{T}}fdm\int_{Y}gd\nu.$$ This finishes the proof.
	\end{proof}
\section{Proofs of Theorem \ref{TB} and Theorem \ref{TC}}
Before proving Theorem \ref{TB} and Theorem \ref{TC}, let us introduce two results, which point out the characteristic factor behavior of the $\infty$-step factors.
\begin{thm}\label{thm4-1}
	$($\cite[Theorem 1]{HK05}, \cite[Theorem 3]{L05}$)$ Given $k\in \N$, let $P_1,\ldots,P_k\in \Z[n]$. Let $T$ be an invertible measure preserving transformation acting on Lebesgue space $(X,\X,\mu)$. Fix $f_1,\ldots,f_{k}\in L^{\infty}(\mu)$. If there is some $1\le j\le k$ such that $\E_{\mu}(f_j|\mathcal{Z}_{\infty}(T))=0$, then 
	$$\lim_{N\to\infty}\frac{1}{N}\sum_{n=1}^{N}\prod_{i=1}^{k}f_{i}(T^{P_{i}(n)}x)=0$$ in $L^{2}(\mu)$.
\end{thm}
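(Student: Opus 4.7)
The plan is to establish that the Host-Kra--Ziegler $\infty$-step factor $\mathcal{Z}_{\infty}(T)$ is characteristic for the polynomial average, following the PET-induction scheme of Bergelson combined with the seminorm control of Host-Kra (and Leibman's variant). By the linearity and symmetry of the average, it suffices to assume $\mathbb{E}_{\mu}(f_j\mid \mathcal{Z}_{\infty}(T))=0$ for a fixed index $j$, with all other functions $1$-bounded, and to show the average tends to $0$ in $L^2(\mu)$. Since $\nnorm{f_j}_{s+1}=0$ is equivalent to $\mathbb{E}_{\mu}(f_j\mid \mathcal{Z}_s(T))=0$, and the filtration is increasing in $s$, our hypothesis gives $\nnorm{f_j}_{s}=0$ for every $s\ge 1$. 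Therefore, the task reduces to producing, for the given polynomial family $(P_1,\dots,P_k)$, an integer $s=s(P_1,\dots,P_k)$ and a constant $C$ such that
\begin{equation*}
\limsup_{N\to\infty}\left\Vert \frac{1}{N}\sum_{n=1}^{N}\prod_{i=1}^{k}T^{P_i(n)}f_i\right\Vert_{L^2(\mu)} \le C\,\nnorm{f_j}_{s}.
\end{equation*}

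The core mechanism is van der Corput's inequality applied iteratively. First, I would apply it once to the sequence $a_n=\prod_{i=1}^{k} T^{P_i(n)}f_i$: the inner product $\langle a_{n+h}, a_n\rangle$ expands, after applying $T^{-P_j(n)}$, to an average of products of the form $\prod_{i} T^{P_i(n+h)-P_j(n)}f_i \cdot T^{P_i(n)-P_j(n)}\bar{f}_{i}$, which is a new polynomial family in $n$ with $h$ as a parameter. One then chooses the distinguished polynomial and the indexing so that the \emph{PET weight} (the multiset of leading terms, ordered lexicographically by degree and coefficient) strictly decreases. Iterating van der Corput finitely many times, the family is reduced either to the constant family or to a single nonconstant polynomial, at which point the limit $L^2$-norm is bounded by an average of Gowers-type expressions in $f_j$, shifted by $h_1,\dots,h_s$, which after using the defining recursion of $\nnorm{\cdot}_s$ is dominated by $C\cdot \nnorm{f_j}_s$.

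The remaining step is to identify the limiting average (after PET reduction) as the Host-Kra seminorm of $f_j$, at least up to a bounded multiplicative constant. This uses the inductive definition $\nnorm{f}_{k+1}^{2^{k+1}} = \lim_H \frac{1}{H}\sum_h \nnorm{f\cdot T^h \bar f}_{k}^{2^{k}}$, together with Cauchy-Schwarz at each van der Corput step, and the observation that after sufficiently many applications of van der Corput only the shifts of $f_j$ remain to be estimated, everything else being absorbed into a $L^\infty$-bound. Combined with $\nnorm{f_j}_s=0$ for all $s$, this yields the desired vanishing.

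The main obstacle is the bookkeeping of the PET induction: one has to verify that the chosen well-ordering on polynomial tuples (leading coefficient and degree profile) indeed strictly decreases at every step, that the ``non-trivial'' part of the family never collapses before the distinguished polynomial $P_j$ is isolated, and that the resulting average survives as a genuine Host-Kra-type expression in $f_j$ rather than degenerating. This is precisely where the hypotheses of Theorem \ref{thm4-1}, implicitly reflected in the choice of polynomials, come in (any polynomial family can be handled because the single-transformation setting gives us enough flexibility to normalize). Once PET induction terminates with a bound by some $\nnorm{f_j}_s$, the conclusion follows immediately from the definition of $\mathcal{Z}_\infty(T)$.
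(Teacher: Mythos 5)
The paper never proves this statement at all: it is imported as a black box from Host--Kra and Leibman, so the only meaningful benchmark is their proofs, whose strategy (van der Corput/PET induction reducing the average to Host--Kra seminorm control, then using that $\E_{\mu}(f_j|\mathcal{Z}_{\infty}(T))=0$ forces $\nnorm{f_j}_{s}=0$ for every $s$) you have correctly identified in outline. But what you have written is a plan rather than a proof: the entire mathematical content of the theorem is the assertion that the PET induction terminates with a bound of the form $\limsup_{N}\lVert\cdot\rVert_{L^{2}(\mu)}\le C\,\nnorm{f_j}_{s}$ for the \emph{prescribed} index $j$ (which need not be attached to a polynomial of maximal degree), and you explicitly defer exactly this point (``the main obstacle is the bookkeeping of the PET induction'') without defining the weight/ordering, verifying its strict decrease under your van der Corput step, or showing that the designated copy of $f_j$ survives to the end rather than being absorbed. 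That bookkeeping is precisely where the cited proofs do their work, so the gap is not cosmetic.

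In addition, your parenthetical claim that ``any polynomial family can be handled because the single-transformation setting gives us enough flexibility to normalize'' is false and conceals a genuine hypothesis. If two of the polynomials coincide, or one of them is constant, the conclusion can fail outright: in a weakly mixing system take $f_{1}=f$ with $\int f\,d\mu=0$, $f\neq 0$, $f_{2}=\bar f$, and $P_{1}(n)=P_{2}(n)=n$; then $\E_{\mu}(f_{1}|\mathcal{Z}_{\infty}(T))=0$ while $\frac{1}{N}\sum_{n=1}^{N}T^{n}(|f|^{2})\to\int|f|^{2}d\mu>0$. The theorems of Host--Kra and Leibman being invoked are stated for nonconstant, pairwise essentially distinct polynomials (pairwise differences nonconstant), and any correct argument must either impose that hypothesis or carry out a reduction to it that is compatible with the assumption being placed on the single function $f_j$; your sketch does neither (and, as transcribed, the paper's statement itself omits these hypotheses, which your ``normalization'' remark cannot repair). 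Until the PET induction is actually executed under the correct assumptions and shown to output $\nnorm{f_j}_{s}$ for the chosen $j$, the proposal does not constitute a proof.
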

\begin{thm}\label{thm4-2}
	$($\cite[Theorem 2.8]{FK22}$)$ Given $k\in \N$, let $T_1,\ldots,T_{k}$ be a family of commuting invertible measure preserving transformations acting on Lebesgue space $(X,\X,\mu)$. Let $P_1,\ldots,P_k\in \Z[n]$ and they are pairwise independent.  Fix $f_1,\ldots,f_{k}\in L^{\infty}(\mu)$. If there is some $j\in \{1,\ldots,k\}$ such that $\E_{\mu}(f_j|\mathcal{Z}_{\infty}(T_j))=0$, then
	$$\lim_{N\to\infty}\frac{1}{N}\sum_{n=1}^{N}\prod_{i=1}^{k}f_{i}(T_{i}^{P_{i}(n)}x)=0$$ in $L^{2}(\mu)$.
\end{thm}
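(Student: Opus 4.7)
The plan is to reduce the statement to a bound in terms of Host-Kra seminorms of $f_j$ with respect to $T_j$, via iterated van der Corput combined with the PET (polynomial exhaustion technique) induction of Bergelson-Leibman. After relabeling, assume the bad function is $f_1$, so $\E_\mu(f_1\mid\mathcal{Z}_\infty(T_1))=0$, and set $A_N=\frac{1}{N}\sum_{n=1}^{N}\prod_{i=1}^{k}T_i^{P_i(n)}f_i$. The goal is $\norm{A_N}_{L^2(\mu)}\to 0$.

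The main step is an iterated van der Corput argument. A single application, after using commutativity to renormalize one of the products $\prod_i T_i^{P_i(n)}$, yields
$$\limsup_N \norm{A_N}_{L^2(\mu)}^{2}\le \limsup_H \frac{1}{H}\sum_{h=1}^{H}\limsup_N \left|\frac{1}{N}\sum_{n=1}^{N}\int \prod_{i=1}^{k}T_i^{Q_i(n,h)}\tilde f_{i,h}\,d\mu\right|,$$
where $Q_i(n,h)=P_i(n+h)-P_i(n)$ has strictly smaller degree in $n$ than $P_i$. I would iterate, following the Bergelson-Leibman PET scheme: at each step select a distinguished function to conjugate against, producing a new polynomial family whose PET-weight (in the lexicographic ordering on multi-indices of leading coefficients) is strictly smaller. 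The pairwise independence of $P_1,\ldots,P_k$, together with the commutativity of the $T_i$, is used precisely to rule out degenerate coincidences that would either stop the weight from decreasing or collapse the family before one can isolate $f_1$.

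After finitely many steps the procedure terminates in a bound of the shape
$$\limsup_N \norm{A_N}_{L^2(\mu)}^{2^s}\le C\,\nnorm{f_1}_{s}^{2^s}$$
for some $s\in\N$, where $\nnorm{\cdot}_s$ is the Host-Kra seminorm for $(X,\X,\mu,T_1)$ defined in Section 2. The characterization recalled there gives $\nnorm{f_1}_s=0$ if and only if $\E_\mu(f_1\mid\mathcal{Z}_{s-1}(T_1))=0$, and since $\mathcal{Z}_{s-1}(T_1)\subset \mathcal{Z}_\infty(T_1)$ the hypothesis forces $\nnorm{f_1}_s=0$, whence $\norm{A_N}_{L^2(\mu)}\to 0$.

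The main obstacle will be the PET bookkeeping in the middle step. Each van der Corput iteration doubles the family and twists the polynomial orbits by shifts $h_j$ under different commuting $T_i$, and one must choose the conjugation target at each stage so that after finitely many rounds the surviving family consists only of shifts of $f_1$ under $T_1$. The pairwise independence of the $P_i$ is exactly what prevents degenerate coincidences (e.g.\ two polynomials becoming equal after a shift) that would otherwise terminate the induction in a configuration not tied to $f_1$, and handling this combinatorial book\-keeping cleanly in the presence of several distinct commuting transformations is the crux of the argument.
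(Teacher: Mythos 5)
There is a genuine gap, and it sits exactly where you flag the ``crux''. First, note that the paper does not prove Theorem \ref{thm4-2} at all: it is quoted from \cite[Theorem 2.8]{FK22}, so the relevant question is whether your sketch could stand as a proof, and it cannot in its present form. Your single van der Corput step is already incorrect for genuinely distinct commuting transformations: the inner product $\langle \prod_i T_i^{P_i(n+h)}f_i,\ \prod_i T_i^{P_i(n)}f_i\rangle$ can only be renormalized by composing with \emph{one} measure preserving map, say $T_1^{-P_1(n)}$; this removes the exponent of $T_1$ but turns the other factors into terms of the form $T_i^{P_i(n+h)}T_1^{-P_1(n)}f_i$, i.e.\ polynomial orbits in the group generated by $T_1,\ldots,T_k$ whose exponent polynomials do \emph{not} drop in degree. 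The clean reduction to $Q_i(n,h)=P_i(n+h)-P_i(n)$ of strictly smaller degree is only available when $T_1=\cdots=T_k$, which is the setting of Theorem \ref{thm4-1} (Host--Kra, Leibman), not of Theorem \ref{thm4-2}. The PET weight does decrease in the multi-transformation setting, but the bookkeeping involves pairs of transformations and is not the one you describe.

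Second, and more fundamentally, the terminal estimate you aim for, $\limsup_N\norm{A_N}_{L^2(\mu)}^{2^s}\le C\,\nnorm{f_1}_{s}^{2^s}$ with $\nnorm{\cdot}_s$ the Host--Kra seminorm of the single transformation $T_1$, is precisely what an iterated van der Corput/PET argument does \emph{not} give for commuting transformations. What PET produces is control by ``box'' seminorms taken along several of the transformations (and compositions such as $T_iT_j^{-1}$), whose associated factors are not $\mathcal{Z}_{\infty}(T_1)$; indeed, for commuting actions the individual Host--Kra factors are in general not characteristic, and upgrading box-seminorm control to control by $\E_{\mu}(f_j|\mathcal{Z}_{\infty}(T_j))$ for a single $T_j$ is the main difficulty and the main content of \cite{FK22}, handled there with substantially heavier tools (degree-lowering/concatenation-type arguments) rather than PET bookkeeping. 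Pairwise independence of the $P_i$ alone does not make this issue disappear, so the final step of your sketch is not a routine combinatorial matter but a missing idea.
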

Now, we begin to prove Theorem \ref{TB}.
\begin{proof}[Proof of Theorem \ref{TB}]
Without loss of generality, we can assume that $P_{1},\ldots,P_{k}$ have zero constant terms. Let $X\to \mathcal{M}(X),x\mapsto \mu_x$ be the ergodic decomposition of $\mu$ with respect to $S$. Fix $1$-bounded $g_1,\ldots,g_{k+1}\in L^{\infty}(\mu)$. By \cite[Theorem 1]{L05}, it suffices to prove the following:
\begin{equation}\label{eq4-5}
\begin{split}
& \lim_{N\to\infty}\frac{1}{N}\sum_{n=1}^{N}g_{1}(T_{1}^{P_{1}(n)}x)\cdots g_{k}(T_{k}^{P_{k}(n)}x)g_{k+1}(S^{\Omega(n)}x)
\\ & \hspace{2.5cm} =
\int_{X}g_{k+1}d\mu_{x}\lim_{N\to\infty}\frac{1}{N}\sum_{n=1}^{N}g_{1}(T_{1}^{P_{1}(n)}x)\cdots g_{k}(T_{k}^{P_{k}(n)}x)
\end{split}
\end{equation}
in $L^{2}(\mu)$

Firstly, we show that the characteristic factor of the ergodic averages stated in the left side of  \eqref{eq4-5} is $\mathcal{Z}_{\infty}(T_i),1\le i\le k$.  

By \cite[Proof of Lemma 2.15]{D24}, we can assume that $|g_{k+1}|\equiv 1$. Let the set $D=\{(p,q)\in \mathbb{P}^{2}:p\neq q\ \text{and there are}\ 1\le i\le j\le k,x,y\in \Z\backslash \{0\}\ \text{such that}\ xP_{i}(pn)+yP_{j}(qn)\equiv 0\}$. Then by the assumption on $P_1,\ldots,P_k$, a simple calculation gives that $D$ is empty or finite.

For each $n\in \N$, let $$A(n)=T_{1}^{P_{1}(n)}g_{1}\cdots T_{k}^{P_{k}(n)}g_{k}\cdot S^{\Omega(n)}g_{k+1}.$$
Fix two distinct $p,q\in \mathbb{P}$ such that $(p,q)\notin D$. Then 
\begin{align}
& \frac{1}{N}\sum_{n=1}^{N}\langle A(pn),A(qn)\rangle\notag
\\ = &
\frac{1}{N}\sum_{n=1}^{N}\int_{X}\prod_{i=1}^{k}g_{i}(T_{i}^{P_{i}(pn)}x)\cdot \prod_{i=1}^{k}\bar{g}_{i}(T_{i}^{P_{i}(qn)}x)\cdot \left(g_{k+1}\cdot \bar{g}_{k+1}\right)(S^{\Omega(n)+1}x)d\mu(x) \label{eq4-6}
\\ = &
\frac{1}{N}\sum_{n=1}^{N}\int_{X}\prod_{i=1}^{k}g_{i}(T_{i}^{P_{i}(pn)}x)\cdot \prod_{i=1}^{k}\bar{g}_{i}(T_{i}^{P_{i}(qn)}x)d\mu(x).\notag
\end{align}
Note that by the choices of $p$ and $q$, the polynomial family $$\{P_{1}(pn),\ldots,P_{k}(pn),P_{1}(qn),\ldots,P_{k}(qn)\}$$ is pairwise independent. By Theorem \ref{thm4-2} and \eqref{eq4-6}, we know that if there is some $1\le j\le k$ such that $\E_{\mu}(g_j|\mathcal{Z}_{\infty}(T_j))=0$, then
\begin{equation}\label{eq4-7}
\lim_{N\to\infty}\frac{1}{N}\sum_{n=1}^{N}\langle A(pn),A(qn)\rangle=0.
\end{equation}
Combining \eqref{eq4-7} and Lemma \ref{lem2-1}, we have that if there is some $1\le j\le k$ such that $\E_{\mu}(g_j|\mathcal{Z}_{\infty}(T_j))=0$, then 
\begin{equation}\label{eq4-8}
\lim_{N\to\infty}\frac{1}{N}\sum_{n=1}^{N}A(n)=0
\end{equation}
in $L^{2}(\mu)$. \eqref{eq4-8} means that 
\begin{equation}\label{eq4-9}
\begin{split}
& \lim_{N\to\infty}\Big|\frac{1}{N}\sum_{n=1}^{N}g_{1}(T_{1}^{P_{1}(n)}x)\cdots g_{k}(T_{k}^{P_{k}(n)}x)g_{k+1}(S^{\Omega(n)}x)- 
\\  & \hspace{0.5cm}
\frac{1}{N}\sum_{n=1}^{N}\E_{\mu}(g_1|\mathcal{Z}_{\infty}(T_1))(T_{1}^{P_{1}(n)}x)\cdots \E_{\mu}(g_k|\mathcal{Z}_{\infty}(T_k))(T_{k}^{P_{k}(n)}x)g_{k+1}(S^{\Omega(n)}x)\Big|=0
\end{split}
\end{equation}
in $L^{2}(\mu)$.

Next, we show that 
\begin{equation}\label{eq4-10}
\begin{split}
& \lim_{N\to\infty}\frac{1}{N}\sum_{n=1}^{N}\E_{\mu}(g_1|\mathcal{Z}_{\infty}(T_1))(T_{1}^{P_{1}(n)}x)\cdots \E_{\mu}(g_k|\mathcal{Z}_{\infty}(T_k))(T_{k}^{P_{k}(n)}x)g_{k+1}(S^{\Omega(n)}x)
\\ = &
\int_{X}g_{k+1}d\mu_{x}\lim_{N\to\infty}\frac{1}{N}\sum_{n=1}^{N}\E_{\mu}(g_1|\mathcal{Z}_{\infty}(T_1))(T_{1}^{P_{1}(n)}x)\cdots \E_{\mu}(g_k|\mathcal{Z}_{\infty}(T_k))(T_{k}^{P_{k}(n)}x)
\end{split}
\end{equation}
in $L^{2}(\mu)$. By combining Theorem \ref{thm4-2} and \eqref{eq4-9}, we know that if we prove this, then we finish the proof of \eqref{eq4-5}.

By \cite[Theorem 16.10]{HK-book}, for each $1\le i\le k$, there exists a sequence of functions $\{f_{i,j}\}_{j\ge 1}$ such that the following hold:
\begin{itemize}
	\item[(1)] for each $1\le i\le k,j\ge 1$ and $\mu$-a.e. $x\in X$, $\{f_{i,j}(T_{i}^{P_{i}(n)}x)\}_{n\in\Z}$ is a nilsequence;
	\item[(2)] for each $1\le i\le k,j\ge 1$, $\norm{\E_{\mu}(g_i|\mathcal{Z}_{\infty}(T_i))-f_{i,j}}_{L^{2k}(\mu)}\le 1/2^{j}$;
	\item[(3)] for each $1\le i\le k,j\ge 1$, $\norm{f_{i,j}}_{L^{\infty}(\mu)}\le 1$.
\end{itemize}
Then there exists $X_0\in \X$ with $\mu(X_0)=1$ such that the following hold:
\begin{itemize}
	\item[(1)] for any $y\in X_0$, each $1\le i\le k,j\ge 1$, and $\mu_y$-a.e. $x\in X$, $\{f_{i,j}(T_{i}^{P_{i}(n)}x)\}_{n\in\Z}$ is a nilsequence;
	\item[(2)] for any $y\in X_0$, $(X,\X,\mu_y,S)$ is ergodic;
	\item[(3)] for any $y\in X_0$, $\norm{g_{k+1}}_{L^{\infty}(\mu_y)}\le 1$;
	\item[(4)] for any $y\in X_0$ and each $1\le i\le k,j\ge 1$, $\norm{f_{i,j}}_{L^{\infty}(\mu_y)}\le 1$.
\end{itemize}
To prove \eqref{eq4-10}, it suffices to show that for any $y\in X_0$ and each $j\ge 1$, \begin{equation}\label{eq4-11}
\begin{split}
& \lim_{N\to\infty}\frac{1}{N}\sum_{n=1}^{N}f_{1,j}(T_{1}^{P_{1}(n)}x)\cdots f_{k,j}(T_{k}^{P_{k}(n)}x)g_{k+1}(S^{\Omega(n)}x)
\\ & \hspace{2cm} =
\int_{X}g_{k+1}d\mu_{y}\lim_{N\to\infty}\frac{1}{N}\sum_{n=1}^{N}f_{1,j}(T_{1}^{P_{1}(n)}x)\cdots f_{k,j}(T_{k}^{P_{k}(n)}x)
\end{split}
\end{equation} in $L^{2}(\mu_y)$.
To see this, let us do the following calculation:
\begin{align*}
& \limsup_{N\to\infty}\norm{\frac{1}{N}\sum_{n=1}^{N}\prod_{i=1}^{k}\E_{\mu}(g_i|\mathcal{Z}_{\infty}(T_i))(T_{i}^{P_{i}(n)}x)\cdot\left(g_{k+1}(S^{\Omega(n)}x)-\int_{X}g_{k+1}d\mu_{x}\right)}_{L^{2}(\mu)}
\\ \le &
\limsup_{j\to\infty}\limsup_{N\to\infty}\norm{\frac{1}{N}\sum_{n=1}^{N}\prod_{i=1}^{k}f_{i,j}(T_{i}^{P_{i}(n)}x)\cdot\left(g_{k+1}(S^{\Omega(n)}x)-\int_{X}g_{k+1}d\mu_{x}\right)}_{L^{2}(\mu)}+
\\ & \hspace{0.5cm}
2\limsup_{j\to\infty}\limsup_{N\to\infty}\left(\frac{1}{N}\sum_{n=1}^{N}\norm{\left(\prod_{i=1}^{k}T_{i}^{P_{i}(n)}f_{i,j}-\prod_{i=1}^{k}T_{i}^{P_{i}(n)}\E_{\mu}(g_i|\mathcal{Z}_{\infty}(T_i))\right)}_{L^{2}(\mu)}^{2}\right)^{1/2}
\\ \le &
\sup_{j\ge 1}\limsup_{N\to\infty}\left(\int_{X}\left|\frac{1}{N}\sum_{n=1}^{N}\prod_{i=1}^{k}f_{i,j}(T_{i}^{P_{i}(n)}x)\cdot\left(g_{k+1}(S^{\Omega(n)}x)-\int_{X}g_{k+1}d\mu_{x}\right)\right|^{2}d\mu(x)\right)^{1/2}
\\ \le &
\sup_{j\ge 1}\norm{\limsup_{N\to\infty}\norm{\left|\frac{1}{N}\sum_{n=1}^{N}\prod_{i=1}^{k}f_{i,j}(T_{i}^{P_{i}(n)}x)\cdot\left(g_{k+1}(S^{\Omega(n)}x)-\int_{X}g_{k+1}d\mu_{y}\right)\right|}_{L_{x}^{2}(\mu_y)}^{2}}_{L_{y}^{1}(\mu)}^{\frac{1}{2}}
\\ = &
0,
\end{align*}
where the last equality comes from \eqref{eq4-11}.

Now, we verify \eqref{eq4-11}. Fix $y\in X_0$ and $j\ge 1$. Then we get an ergodic measure preserving system $(X,\X,\mu_y,S)$. By \cite[Theorem 15.27]{G-book}, there exists a uniquely ergodic topological dynamical system $(Y,R)$ with unique $R$-invariant Borel probability measure $\nu$ such that $(X,\X,\mu_y,S)$ and $(Y,\B(Y),\nu,R)$ are isomorphic via the invertible measure preserving transformation $\pi:X\to Y$, where $\mathcal{B}(Y)$ is the Borel $\sigma$-algebra on $Y$. Then we can find 
$X_1\in \X$ with $\mu_{y}(X_1)=1$ and a sequence of functions $\{h_i\}_{i\ge 1}$ in $C(Y)$ such that the following hold:
\begin{itemize}
	\item[(1)] for each $i\ge 1$, $\norm{h_i\circ \pi - g_{k+1}}_{L^{2}(\mu_y)}\le 1/2^i$;
	\item[(2)] for any $x\in X_1,1\le t\le k$, $\{f_{t,j}(T_{t}^{P_{t}(n)}x)\}_{n\in\Z}$ is a nilsequence.
	\item[(3)] for any $x\in X_1$ and each $n\in\Z$, $\pi(S^{n}x)=R^{n}\pi(x)$;
	\item[(4)] for any $y\in Y$ and each $i\ge 1$, $|h_{i}(y)|\le 1$.
\end{itemize}
Note that the product of finitely many nilsequences is still a nilsequence. Then by Theorem \ref{thm2-1}, we know that for any $x\in X_1,i\ge 1$, 
\begin{equation}\label{eq4-12}
\begin{split}
& \lim_{N\to\infty}\frac{1}{N}\sum_{n=1}^{N}f_{1,j}(T_{1}^{P_{1}(n)}x)\cdots f_{k,j}(T_{k}^{P_{k}(n)}x)h_{i}\circ \pi(S^{\Omega(n)}x)
\\ = &
\lim_{N\to\infty}\frac{1}{N}\sum_{n=1}^{N}f_{1,j}(T_{1}^{P_{1}(n)}x)\cdots f_{k,j}(T_{k}^{P_{k}(n)}x)h_{i}(R^{\Omega(n)}\pi(x))
\\ = &
\int_{Y}h_id\nu\lim_{N\to\infty}\frac{1}{N}\sum_{n=1}^{N}f_{1,j}(T_{1}^{P_{1}(n)}x)\cdots f_{k,j}(T_{k}^{P_{k}(n)}x)
\\ = &
\int_{Y}h_i\circ \pi d\mu_{y}\lim_{N\to\infty}\frac{1}{N}\sum_{n=1}^{N}f_{1,j}(T_{1}^{P_{1}(n)}x)\cdots f_{k,j}(T_{k}^{P_{k}(n)}x).
\end{split}
\end{equation}
Based on \eqref{eq4-12}, by a standard approximation argument, we know that \eqref{eq4-11} exists in $L^{2}(\mu_{y})$. This finishes the whole proof.
\end{proof}
The proof of Theorem \ref{TC} is similar to one of Theorem \ref{TB}. The only difference between them is that we should use Theorem \ref{thm4-1} in the proof of Theorem \ref{TC} instead of Theorem \ref{thm4-2}.

Let $X\to \mathcal{M}(X),x\mapsto \mu_x$ be the ergodic decomposition of $\mu$ with respect to $S$. Once one finishes the proof of Theorem \ref{TC}, the following equality will appear:
\begin{equation}\label{eq4-13}
\begin{split}
& \lim_{N\to\infty}\frac{1}{N}\sum_{n=1}^{N}g_{1}(T^{P_{1}(n)}x)\cdots g_{k}(T^{P_{k}(n)}x)g_{k+1}(S^{\Omega(n)}x)
\\ & \hspace{2.5cm} =
\int_{X}g_{k+1}d\mu_{x}\lim_{N\to\infty}\frac{1}{N}\sum_{n=1}^{N}g_{1}(T^{P_{1}(n)}x)\cdots g_{k}(T^{P_{k}(n)}x)
\end{split}
\end{equation}
in $L^{2}(\mu)$, where $g_{1},\ldots,g_{k+1}\in L^{\infty}(\mu)$.

\section{Proofs of Proposition \ref{PA} - \ref{PC}}
Firstly, let us recall Furstenberg's corresponding principle.
\begin{thm}\label{thm5-1}
	$(${\em Cf.} \cite[Theorem 1.1]{F77}, \cite[Subsection 2.1]{FHB13}$)$ Let $k\in \N$ and $E\subset \Z^{k}$. Then exists a Lebesgue space $(X,\X,\mu)$,  commuting invertible measure preserving transformations $T_1,\ldots,T_k:X\to X$, and $A\in \X$ with $\mu(A)=d^{*}(E)$ such that $$d^{*}\left(E\cap (E-d_1)\cap \cdots \cap (E-d_m)\right)\ge \mu\left(A\cap \prod_{i=1}^{k}T_{i}^{-d_{1,i}}A\cdots \cap \prod_{i=1}^{k}T_{i}^{-d_{m,i}}A\right)$$ for all $m\in \N$ and all $(d_{1,1},\ldots,d_{1,k}),\ldots,(d_{m,1},\ldots,d_{m,k})\in \Z^{k}$.
\end{thm}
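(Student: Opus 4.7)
The plan is to build the standard symbolic model and pass to a weak-$*$ limit. First I would take $X=\{0,1\}^{\Z^{k}}$ with the product topology; let $e_1,\ldots,e_k$ be the standard basis of $\Z^{k}$, and let $T_1,\ldots,T_k$ be the commuting shift homeomorphisms $(T_ix)(n)=x(n+e_i)$. Set $x_E=1_E\in X$ and $A=\{x\in X:x(0)=1\}$, so that $A$ and all finite intersections of its translates are clopen. Writing $T^{d}=T_1^{d_1}\cdots T_k^{d_k}$ for $d=(d_1,\ldots,d_k)\in\Z^{k}$, the key combinatorial identity is
$$T^{n}x_E\in A\cap T^{-d_1}A\cap\cdots\cap T^{-d_m}A \iff n\in E\cap(E-d_1)\cap\cdots\cap(E-d_m).$$

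Next, I would fix a F\o lner sequence $\{\Phi_N\}$ in $\Z^{k}$ with $|E\cap\Phi_N|/|\Phi_N|\to d^{*}(E)$, and form the empirical averages
$$\mu_N=\frac{1}{|\Phi_N|}\sum_{n\in\Phi_N}\delta_{T^{n}x_E}.$$
By weak-$*$ compactness of Borel probability measures on the compact metric space $X$, together with a diagonal extraction over the countable family of clopen sets $A\cap T^{-d_1}A\cap\cdots\cap T^{-d_m}A$ indexed by $(d_1,\ldots,d_m)\in\bigcup_m(\Z^k)^m$, I can pass to a subsequence $\{N_j\}$ along which $\mu_{N_j}\to\mu$ weakly and $\mu_{N_j}(B)\to\mu(B)$ for every such clopen $B$. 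Since $X$ is compact metrizable, $(X,\B(X),\mu)$ is a Lebesgue space.

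I would then verify that $\mu$ is $T_i$-invariant for each $i$: the measures $(T_i)_*\mu_N$ and $\mu_N$ differ in total variation by at most $|\Phi_N\Delta(\Phi_N-e_i)|/|\Phi_N|$, which tends to zero by the F\o lner property, so $\mu$ is preserved by each $T_i$, and commutativity is inherited from the shifts. Because $A$ is clopen, $\mu(A)=\lim_j|E\cap\Phi_{N_j}|/|\Phi_{N_j}|=d^{*}(E)$. The combinatorial identity gives
$$\mu\lk(A\cap \prod_{i=1}^{k}T_{i}^{-d_{1,i}}A\cap\cdots\cap \prod_{i=1}^{k}T_{i}^{-d_{m,i}}A\re)=\lim_{j}\frac{|(E\cap(E-d_1)\cap\cdots\cap(E-d_m))\cap\Phi_{N_j}|}{|\Phi_{N_j}|},$$
and since a subsequence of a F\o lner sequence is again F\o lner, the right-hand side is at most $d^{*}(E\cap(E-d_1)\cap\cdots\cap(E-d_m))$ by definition of upper Banach density.

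The only mildly technical point is the simultaneous diagonal extraction, which must guarantee that the weak-$*$ limit $\mu$ carries the correct mass on every one of the countably many clopen test sets appearing in the claim; once that has been arranged, the remainder of the argument is straightforward bookkeeping with indicator functions and the F\o lner condition.
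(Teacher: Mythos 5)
Your argument is correct, and it is essentially the standard proof of Furstenberg's correspondence principle: the paper does not prove Theorem \ref{thm5-1} itself but cites it, and your symbolic model $\{0,1\}^{\Z^k}$ with shifts, empirical measures along a F\o lner sequence realizing $d^{*}(E)$, and a weak-$*$ limit is exactly the argument underlying the cited references. Two minor remarks: the diagonal extraction is superfluous, since indicators of clopen sets are continuous and weak-$*$ convergence already gives $\mu_{N_j}(B)\to\mu(B)$ for every clopen test set $B$, whereas the existence of a F\o lner sequence with $|E\cap\Phi_N|/|\Phi_N|\to d^{*}(E)$ (so that $\mu(A)=d^{*}(E)$ exactly) is the one point that deserves an explicit one-line diagonal construction, using that every F\o lner sequence has $\limsup$ at most $d^{*}(E)$.
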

Based on Theorem \ref{thm5-1}, Proposition \ref{PA} can be deduced from the following result.
\begin{prop}\label{prop5-1}
	Let $S,T_1,\ldots,T_k$ be a family of invertible measure preserving transformations acting on Lebesgue space $(X,\X,\mu)$ and $T_1,\ldots,T_k$ are commuting. Let $P_1,\ldots,P_k$ be pairwise independent integer coefficients polynomials with zero constant terms and any one of them is not of the form $an^d$. Then for every $A\in\X$ with $\mu(A)>0$, there exists a constant $c$, depending only on $\mu(A)$ and $P_1,\ldots,P_k$, such that 
	$$\lim_{N\to\infty}\frac{1}{N}\sum_{n=1}^{N}\mu(A\cap T_{1}^{-P_{1}(n)}A\cap \cdots\cap T_{k}^{-P_{k}(n)}A\cap S^{-\Omega(n)}A)\ge c.$$
\end{prop}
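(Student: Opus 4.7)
The plan is to apply equation \eqref{eq4-5} from the proof of Theorem \ref{TB} with all test functions taken equal to $1_A$, and then combine the resulting identity with a truncation argument and the polynomial Szemer\'edi theorem of Bergelson and Leibman \cite[Theorem A]{BL96}. Set $\phi(x) := \E_\mu(1_A \mid \I(S))(x)$ and
$$L(x) := \lim_{N\to\infty}\frac{1}{N}\sum_{n=1}^N \prod_{i=1}^k 1_A(T_i^{P_i(n)}x),$$
the second limit existing in $L^2(\mu)$ by Walsh's theorem \cite{W12}. Substituting $g_1 = \cdots = g_{k+1} = 1_A$ into \eqref{eq4-5} identifies the $L^2(\mu)$-limit of the average $\frac{1}{N}\sum_{n=1}^N \prod_{i=1}^k 1_A(T_i^{P_i(n)}x)\cdot 1_A(S^{\Omega(n)}x)$ with $\phi(x) L(x)$. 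Multiplying by $1_A(x)$ and integrating, using uniform $1$-boundedness of the averages together with $L^2$-convergence, shows that the limit to be estimated equals
$$I := \int_X 1_A(x)\,\phi(x)\,L(x)\, d\mu(x).$$

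To eliminate the potentially small factor $\phi$, I truncate at level $\delta := \mu(A)/2$ by introducing the $S$-invariant set $B_\delta := \{x\in X : \phi(x)\ge \delta\}$. Since $\int_X \phi\, d\mu = \mu(A)$, the estimate $\int_{B_\delta^c} 1_A\, d\mu = \int_{B_\delta^c}\phi\, d\mu \le \delta$ yields $\mu(A\cap B_\delta) \ge \mu(A)/2$. Using $L\ge 0$ pointwise and $\phi \ge \delta$ on $B_\delta$, one gets
$$I \;\ge\; \delta \int_X 1_{A\cap B_\delta}(x)\,L(x)\,d\mu(x) \;=\; \delta\lim_{N\to\infty}\frac{1}{N}\sum_{n=1}^N \mu\bigl(A\cap B_\delta \cap T_1^{-P_1(n)}A \cap \cdots \cap T_k^{-P_k(n)}A\bigr).$$
Each factor $T_i^{-P_i(n)}A$ contains $T_i^{-P_i(n)}(A\cap B_\delta)$, so \cite[Theorem A]{BL96} applied to the commuting action of $T_1,\ldots,T_k$ and the set $A\cap B_\delta$ produces a constant $c_1>0$, depending only on $\mu(A\cap B_\delta)\ge \mu(A)/2$ and on $P_1,\ldots,P_k$, that bounds the displayed limit below. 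Setting $c := \delta\, c_1$ completes the argument.

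The conceptual content lies in the identity for $I$ coming from \eqref{eq4-5}, which in turn depends on the pairwise independence and non-monomial hypotheses on the $P_i$ via Theorem \ref{thm4-2}. The subsequent truncation is needed because the ergodic decomposition of $\mu$ with respect to $S$ is not in general compatible with the $T_i$-action (the measures $\mu_x$ need not be $T_i$-invariant), so one cannot hope to apply the polynomial Szemer\'edi theorem fibre-by-fibre; handling $\phi$ at the level of $(X,\mu)$ via truncation circumvents this issue and extracts a uniform lower bound.
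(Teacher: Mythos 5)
Your proposal is correct in substance but takes a genuinely different route from the paper after the common first step. Both arguments begin by feeding $g_1=\cdots=g_{k+1}=1_A$ into \eqref{eq4-5}, so that the limit in question equals $\int_X 1_A(x)\,\E_{\mu}(1_A|\mathcal{I}(S))(x)\,L(x)\,d\mu(x)$ with $L$ the $L^2$-limit of the polynomial averages. From there the paper replaces $1_A$ by $\E_{\mu}(1_A|\mathcal{Z}_{\infty}(T_i))$ via Theorem \ref{thm4-2}, approximates these by functions whose orbits give nilsequences so as to define the fibrewise quantity $G(y)$ over the ergodic decomposition for $S$, and runs a Markov-type argument combined with $G(y)\le \mu_y(A)$, applying the quantitative polynomial Szemer\'edi theorem (Theorem \ref{thm5-2}) only once, to $A$ itself with its exact measure, and obtaining the explicit bound $c(\delta)^3/16$. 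You instead truncate $\phi=\E_{\mu}(1_A|\mathcal{I}(S))$ at level $\mu(A)/2$ and absorb the truncation into the set, applying the quantitative theorem to $A\cap B_{\delta}$; this bypasses the second round of nilsequence approximation and the auxiliary function $G$ altogether, and yields the cleaner bound $\tfrac{\mu(A)}{2}c_1$. Two small patches are needed to make your version airtight. First, the tool you invoke must be the quantitative version, i.e.\ Theorem \ref{thm5-2} (\cite[Theorem 4.1]{FHB13}); \cite[Theorem A]{BL96} as stated only gives positivity of the liminf with a constant that may depend on the system and the set, not only on the measure. Second, you know only that $\mu(A\cap B_{\delta})\ge \mu(A)/2$, not its exact value, so the constant $c_1$ must be uniform over all sets of measure at least $\mu(A)/2$; this follows by monotonicity, e.g.\ by passing to a subset of $A\cap B_{\delta}$ of measure exactly $\mu(A)/2$ (after taking a product with an identity circle factor if $X$ has atoms) and noting that shrinking the set only decreases the averages. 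With these adjustments your proof is complete, and arguably simpler than the one in the paper, at the cost of a less explicit constant.
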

Likely, Proposition \ref{PB} and Proposition \ref{PC} can be deduced from a similar result.
\begin{prop}\label{prop5-2}
	Let $S,T$ be two invertible measure preserving transformations acting on Lebesgue space $(X,\X,\mu)$. Let $P_{1},\ldots,P_{k}\in\Z[n]$ with zero constant terms. Then for every $A\in\X$ with $\mu(A)>0$, there exists a constant $c$, depending only on $\mu(A)$ and $P_1,\ldots,P_k$, such that 
	$$\lim_{N\to\infty}\frac{1}{N}\sum_{n=1}^{N}\mu(A\cap T^{-P_{1}(n)}A\cap \cdots\cap T^{-P_{k}(n)}A\cap S^{-\Omega(n)}A)\ge c.$$
\end{prop}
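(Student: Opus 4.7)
The plan is to mimic the Furstenberg-style scheme suggested by the paper: derive the limiting average from Theorem \ref{TC}, then bound it from below using the polynomial Szemer\'edi theorem \cite[Theorem A]{BL96}. Specializing equation \eqref{eq4-13} to $g_{1}=\cdots=g_{k+1}=1_{A}$ and integrating against $1_{A}$ over $\mu$ yields
\[
\lim_{N\to\infty}\frac{1}{N}\sum_{n=1}^{N}\mu(A\cap T^{-P_{1}(n)}A\cap \cdots\cap T^{-P_{k}(n)}A\cap S^{-\Omega(n)}A) \;=\; \int_{X} 1_A(x)\,\mu_x(A)\, L(x)\, d\mu(x),
\]
where $x\mapsto \mu_x$ is the ergodic decomposition of $\mu$ with respect to $S$, and $L(x)=\lim_{N\to\infty}\frac{1}{N}\sum_{n=1}^N \prod_{i=1}^k 1_{A}(T^{P_i(n)}x)$, the limit existing in $L^{2}(\mu)$ by Theorem \ref{thm4-1}. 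The proposition is thus reduced to bounding this right-hand integral from below by a constant depending only on $\mu(A)$ and the $P_{i}$.

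The key step is to localize on a subset of $A$ where the conditional mass $\mu_x(A)$ is not too small. Set $A'=A\cap\{x:\mu_x(A)\ge \mu(A)/2\}$. The identity $\mu_x(A)=\E_\mu(1_A|\I(S))(x)$ together with Cauchy--Schwarz gives $\int_X 1_A(x)\mu_x(A)\,d\mu(x) = \norm{\E_\mu(1_A|\I(S))}_{L^2(\mu)}^2 \ge \mu(A)^{2}$. Splitting this integral as $\int_{A'}\mu_x(A)\,d\mu(x) + \int_{A\setminus A'}\mu_x(A)\,d\mu(x)$ and using the trivial bounds $\mu_x(A)\le 1$ on $A'$ and $\mu_x(A)<\mu(A)/2$ on $A\setminus A'$ forces $\mu(A')\ge \mu(A)^{2}/2$. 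Since $L\ge 0$, it then follows that
\[
\int_{X} 1_A(x)\,\mu_x(A)\, L(x)\, d\mu(x) \;\ge\; \frac{\mu(A)}{2}\int_{A'} L\,d\mu \;=\; \frac{\mu(A)}{2}\lim_{N\to\infty}\frac{1}{N}\sum_{n=1}^N \mu\!\left(A'\cap T^{-P_1(n)}A\cap\cdots\cap T^{-P_k(n)}A\right).
\]
Because $A'\subset A$, this last expression is in turn $\ge \frac{\mu(A)}{2}\liminf_{N}\frac{1}{N}\sum_{n=1}^N \mu\!\left(A'\cap T^{-P_1(n)}A'\cap\cdots\cap T^{-P_k(n)}A'\right)$, which the polynomial Szemer\'edi theorem \cite[Theorem A]{BL96} bounds below by a positive constant depending only on $\mu(A')$ and the $P_{i}$. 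Choosing $c$ accordingly completes the proof.

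The main subtlety is conceptual rather than technical: the measure $\mu_x$ appearing in the displayed identity is the ergodic decomposition with respect to $S$, not $T$, and so need not be $T$-invariant; one cannot directly apply polynomial Szemer\'edi fiberwise on $(X,\X,\mu_x,T)$. The workaround above---passing to the enlarged-mass set $A'$ and then invoking polynomial Szemer\'edi on the original $T$-system $(X,\X,\mu,T)$---bypasses this issue by decoupling the $S$-dependence (absorbed into the factor $\mu_x(A)$) from the $T$-recurrence. The argument for Proposition \ref{prop5-1} is essentially identical: one replaces Theorem \ref{thm4-1} by Theorem \ref{thm4-2} in the scheme of Theorem \ref{TB} to obtain the analogue of \eqref{eq4-13}, and then applies the polynomial Szemer\'edi theorem for commuting transformations \cite{BL96}.
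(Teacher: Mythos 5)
Your proposal is correct, but it reaches the lower bound by a genuinely different route than the paper. Both arguments start from the decoupling identity \eqref{eq4-13} (equivalently, the scheme of Theorem \ref{TC}), which reduces the problem to bounding $\int_X 1_A(x)\,\mu_x(A)\,L(x)\,d\mu(x)$ from below, where $\mu_x=\E_\mu(1_A|\I(S))$-fibers and $L$ is the limit of the purely polynomial averages. At that point the paper approximates $\E_{\mu}(1_A|\mathcal{Z}_{\infty}(T))$ by nilsequence-generating functions $\phi_{i,j_0}$ (via \cite[Theorem 16.10]{HK-book}), defines $G(y)=\int 1_A\cdot\lim_N\frac1N\sum_n\prod_i\phi_{i,j_0}(T^{P_i(n)}x)\,d\mu_y$, applies the quantitative polynomial Szemer\'edi theorem to $A$ itself to get $\int G\,d\mu\gtrsim c(\delta)$, and then uses $G(y)\le\mu_y(A)$ together with a level-set argument to obtain the explicit bound $c(\delta)^3/16$; the nilsequence step is what lets it pass limits through the ergodic decomposition. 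You instead bypass all of that: the Cauchy--Schwarz estimate $\int 1_A\,\mu_x(A)\,d\mu\ge\mu(A)^2$ gives $\mu(A')\ge\mu(A)^2/2$ for $A'=A\cap\{\mu_x(A)\ge\mu(A)/2\}$, and since $L\ge 0$ you land directly on the $T$-recurrence averages of $A'$ inside $A$, to which the quantitative Szemer\'edi theorem applies. Your version is shorter and correctly identifies (and circumvents) the real obstruction, namely that $\mu_x$ need not be $T$-invariant so fiberwise Szemer\'edi is unavailable. Two small repairs: the uniform constant you need is the quantitative statement recorded in the paper as Theorem \ref{thm5-2} (from \cite[Theorem 4.1]{FHB13}), not \cite[Theorem A]{BL96} as such; and because you apply it to $A'$, whose measure is only bounded below by $\mu(A)^2/2$ rather than determined by $\mu(A)$, you should note that the constant can be taken monotone in the measure (e.g.\ by shrinking to a subset of prescribed measure after taking a product with a trivial nonatomic system, or by using the ``for all $A$ with $\mu(A)\ge\delta$'' form of the theorem) so that the final $c$ depends only on $\mu(A)$ and $P_1,\ldots,P_k$. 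Also, the existence of $L$ in $L^2(\mu)$ is Leibman's convergence theorem (\cite[Theorem 1]{L05}), already implicit in \eqref{eq4-13}; Theorem \ref{thm4-1} alone is only the characteristic-factor statement and does not give convergence.
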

Before proving the above propositions, we introduce a quantitative version of the polynomial Szemer\'edi theorem.
\begin{thm}\label{thm5-2}
	$($\cite[Theorem 4.1]{FHB13}$)$ Let $T_1,\ldots,T_k$ be a family of commuting invertible measure preserving tranformations acting on Lebesgue space $(X,\X,\mu)$. Let $P_{1},\ldots,P_{k}\in\Z[n]$ with zero constant terms. Then for every $A\in\X$ with $\mu(A)>0$, there exists a constant $c$, depending only on $\mu(A)$ and $P_1,\ldots,P_k$, such that 
	$$\lim_{N\to\infty}\frac{1}{N}\sum_{n=1}^{N}\mu(A\cap T_{1}^{-P_{1}(n)}A\cap \cdots\cap T_{k}^{-P_{k}(n)}A)\ge c.$$
\end{thm}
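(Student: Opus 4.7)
The plan is to combine the qualitative polynomial Szemer\'edi theorem of Bergelson--Leibman with a Varnavides-type uniformization that upgrades the dependence of the constant to $\mu(A)$ and the polynomials alone.

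First, I would establish qualitative positivity and then the existence of the limit. The Bergelson--Leibman theorem \cite[Theorem A]{BL96} gives that for the fixed system and any $A$ with $\mu(A)>0$,
$$\liminf_{N\to\infty}\frac{1}{N}\sum_{n=1}^{N}\mu\bigl(A\cap T_{1}^{-P_{1}(n)}A\cap\cdots\cap T_{k}^{-P_{k}(n)}A\bigr)>0.$$
By Walsh's convergence theorem \cite{W12} for polynomial multiple averages along commuting transformations (applied to $1_A,\ldots,1_A$), the averages $N^{-1}\sum_{n=1}^{N}\prod_{i}T_{i}^{-P_{i}(n)}1_{A}$ converge in $L^{2}(\mu)$; integrating against $1_{A}$ promotes the $\liminf$ to a genuine limit. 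Next, using the ergodic decomposition with respect to the $\Z^{k}$-action generated by $T_{1},\ldots,T_{k}$, together with Fatou's lemma, it suffices to obtain the lower bound for each ergodic component, so I may henceforth assume the system is ergodic for the joint action.

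Next comes the quantitative step, which is the actual substance of the theorem. I would argue by contradiction, assuming that some $\delta>0$ and some polynomials $P_{1},\ldots,P_{k}$ admit a sequence of ergodic commuting systems $(X_{j},\X_{j},\mu_{j},T_{1,j},\ldots,T_{k,j})$ with sets $A_{j}\in\X_{j}$ satisfying $\mu_{j}(A_{j})\ge\delta$ yet
$$c_{j}:=\lim_{N\to\infty}\frac{1}{N}\sum_{n=1}^{N}\mu_{j}\bigl(A_{j}\cap T_{1,j}^{-P_{1}(n)}A_{j}\cap\cdots\cap T_{k,j}^{-P_{k}(n)}A_{j}\bigr)\longrightarrow 0.$$
Reversing Furstenberg's correspondence (Theorem \ref{thm5-1}), I would realize each pair $(X_{j},A_{j})$ as a subset $E_{j}\subset\Z^{k}$ with $d^{*}(E_{j})\ge\delta$ whose multiple-return density of the prescribed polynomial configuration tends to $0$ in $j$. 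I would then combine these examples into a single set $E\subset\Z^{k}$ by placing suitably rescaled copies of the $E_{j}$ inside pairwise disjoint and well-separated boxes of $\Z^{k}$, arranged so that the spacing dwarfs the diameter of any configuration $(P_{1}(n),0,\ldots),\ldots,(0,\ldots,P_{k}(n))$ occurring at the scales relevant to the returns. The resulting $E$ inherits $d^{*}(E)\ge\delta$ but, by construction, has vanishing density of polynomial configurations, contradicting the combinatorial polynomial Szemer\'edi theorem.

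The main obstacle is the packing/spacing bookkeeping in the last step: one must choose the sizes and gaps between the boxes delicately so that boundary-crossing configurations do not contribute spuriously, while the Banach density of the union stays bounded below by $\delta$. A cleaner alternative, which avoids this combinatorial gymnastics, is to exploit Theorem \ref{thm4-2}: projecting $1_{A}$ onto each pronilfactor $\ZZ_{\infty}(T_{i})$ preserves the limit, so the limiting expression can be rewritten as an integral against a polynomial orbit measure on a product of inverse limits of nilmanifolds. Using Leibman's equidistribution theorem on nilmanifolds and Cauchy--Schwarz (or a gauge argument in the style of \cite{FHB13}), one then bounds this integral from below by an explicit function of $\mu(A)$ and the degrees and leading coefficients of the $P_{i}$, yielding the claimed uniform lower bound without appealing to a reverse correspondence argument.
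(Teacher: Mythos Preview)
The paper does not prove this theorem at all; it is quoted as \cite[Theorem~4.1]{FHB13} and used as a black box in the proof of Proposition~\ref{prop5-1}. So there is no argument in the paper to compare your proposal against.

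That said, your first approach has a genuine gap. The packing construction cannot produce a set $E\subset\Z^{k}$ with $d^{*}(E)\ge\delta$ and \emph{vanishing} configuration density in the $d^{*}$ sense. Upper Banach density is a supremum over F{\o}lner sequences, so if you place each $E_{j}$ inside a box $B_{j}$, then for every fixed $n$ one has
\[
d^{*}\bigl(E\cap(E-P_{1}(n)\vec{e}_{1})\cap\cdots\cap(E-P_{k}(n)\vec{e}_{k})\bigr)\ \ge\ \sup_{j}\ \frac{|E_{j}\cap(E_{j}-P_{1}(n)\vec{e}_{1})\cap\cdots|}{|B_{j}|},
\]
and there is no reason this supremum, averaged over $n$, should tend to zero merely because each individual $c_{j}$ does. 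Moreover, even if you managed to make the averaged configuration density equal to zero, the \emph{qualitative} combinatorial polynomial Szemer\'edi theorem only guarantees the existence of a single configuration, which is perfectly compatible with density zero; you would need the quantitative combinatorial version, and that is essentially equivalent to what you are trying to prove. The clean route, if you want to stay on this line, is: deduce the finitary combinatorial statement from Bergelson--Leibman by compactness, run a genuine Varnavides averaging (over translates and dilates of a fixed window) to get the quantitative combinatorial bound, and then transfer back to dynamics via a generic-point argument for an ergodic $\Z^{k}$-action---no packing needed.

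Your second alternative, reducing to pronilfactors via Theorem~\ref{thm4-2} and invoking equidistribution on nilmanifolds, is the approach actually taken in \cite{FHB13}, but as written your sketch is essentially a pointer to that paper rather than an independent argument.
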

Now, we are about to prove Proposition \ref{prop5-1}. 
\begin{proof}[Proof of Proposition \ref{prop5-1}]
	Fix $\delta\in (0,1)$. Fix a Lebesgue space $(X,\X,\mu)$, a family of commuting invertible measure preserving tranformations $T_1,\ldots,T_k$ acting on it, and $A\in \X$ with $\mu(A)=\delta$. Fix an invertible measure preserving tranformation $S$ acting on $(X,\X,\mu)$. By Theorem \ref{thm5-2}, there exists a constant $c(\delta)\in (0,1)$, depending on depending only on $\delta$ and $P_1,\ldots,P_k$, such that  
	\begin{equation}\label{eq4-1}
	\lim_{N\to\infty}\frac{1}{N}\sum_{n=1}^{N}\mu(A\cap T_{1}^{-P_{1}(n)}A\cap \cdots\cap T_{k}^{-P_{k}(n)}A)\ge c(\delta).
	\end{equation}
		Then by \eqref{eq4-1} and Theorem \ref{thm4-2}, we have 
		\begin{equation}\label{eq4-2}
		\lim_{N\to\infty}\frac{1}{N}\sum_{n=1}^{N}\int_{X}1_{A}(x)\cdot \prod_{j=1}^{k}\E_{\mu}(1_A|\mathcal{Z}_{\infty}(T_j))(T_{j}^{P_{j}(n)}x)d\mu(x)\ge c(\delta).
		\end{equation}
		By \cite[Theorem  16.10]{HK-book}, for each $1\le i\le k$, there exists a sequence of functions $\{\phi_{i,j}\}_{j\ge 1}$ such that the following hold:
		\begin{itemize}
			\item[(1)] for any $1\le i\le k,j\ge 1$ and $\mu$-a.e. $x\in X$, $\{\phi_{i,j}(T_{i}^{P_{i}(n)}x)\}_{n\in\Z}$ is a nilsequence;
			\item[(2)] for each $1\le i\le k,j\ge 1$, $\norm{\E_{\mu}(1_{A}|\mathcal{Z}_{\infty}(T_i))-\phi_{i,j}}_{L^{2k}(\mu)}\le 1/16^{jk}$;
			\item[(3)] for each $1\le i\le k,j\ge 1$, $0\le \phi_{i,j}\le 1$.
		\end{itemize}
		Then there exists a sufficiently large $j_0$ such that 
		\begin{equation}\label{eq6-a}
			\lim_{N\to\infty}\norm{\frac{1}{N}\sum_{n=1}^{N}\left(\prod_{i=1}^{k}\E_{\mu}(1_A|\mathcal{Z}_{\infty}(T_i))(T_{i}^{P_{i}(n)}x)- \prod_{i=1}^{k}\phi_{i,j_0}(T_{i}^{P_{i}(n)}x)\right)}_{L^{2}(\mu)}< c(\delta)^{3}/32.
		\end{equation}
		
%
%
		Let $X\to \mathcal{M}(X),y\mapsto \mu_y$ be the ergodic decomposition of $\mu$ with respect to $S$. 
		Note that the product of finitely many nilsequences is still a nilsequence. Then we can define $G:X\to [0,1]$ by putting 
		$$G(y)=\int_{X}1_{A}(x)\cdot \lim_{N\to\infty}\frac{1}{N}\sum_{n=1}^{N}\prod_{i=1}^{k}\phi_{i,j_0}(T_{i}^{P_{i}(n)}x)d\mu_{y}(x) $$ for $\mu$-a.e. $y\in X$. Clearly, 
		$$\int_{X}G(y)d\mu(y)=\lim_{N\to\infty}\frac{1}{N}\sum_{n=1}^{N}\int_{X}1_{A}(x)\cdot\prod_{i=1}^{k}\phi_{i,j_0}(T_{i}^{P_{i}(n)}x)d\mu(x).$$
		So, by \eqref{eq6-a} and \eqref{eq4-2}, we have $$\int_{X}G(y)d\mu(y)\ge 7c(\delta)/8.$$ 
		So,  
		\begin{align}
					& 7c(\delta)/8\le \int_{X}G(y)d\mu(y) \notag
							\\ = &
							\int_{\{y\in X:G(y)>c(\delta)/2\}}G(y)d\mu(y)+\int_{\{y\in X:G(y)\le c(\delta)/2\}}G(y)d\mu(y) \label{eq4-3}
							\\ \le &
							\mu(\{y\in X:G(y)>c(\delta)/2\})+c(\delta)/2. \notag
							\end{align}
							By \eqref{eq4-3}, we have 
							\begin{equation}\label{eq4-4}
							\mu(E:=\{y\in X:G(y)>c(\delta)/2\})\ge 3c(\delta)/8.
							\end{equation}
			
		By Theorem \ref{TB}, we know that the limit $$\lim_{N\to\infty}\frac{1}{N}\sum_{n=1}^{N}\mu(A\cap T_{1}^{-P_{1}(n)}A\cap \cdots\cap T_{k}^{-P_{k}(n)}A\cap S^{-\Omega(n)}A)$$ exists. 

		Next, we begin to estimate the uniform lower bound. 
		\begin{align*}
		& \lim_{N\to\infty}\frac{1}{N}\sum_{n=1}^{N}\mu(A\cap T_{1}^{-P_{1}(n)}A\cap \cdots\cap T_{k}^{-P_{k}(n)}A\cap S^{-\Omega(n)}A)
		\\ = &
		\lim_{N\to\infty}\int_{X}1_{A}(x)\cdot\left( \frac{1}{N}\sum_{n=1}^{N}1_{A}(T_{1}^{P_{1}(n)}x)\cdots 1_{A}(T_{k}^{P_{k}(n)}x)\cdot 1_{A}(S^{\Omega(n)}x)\right)d\mu(x)
		\\ = &
		\lim_{N\to\infty}\int_{X}1_{A}(x)\cdot \mu_{x}(A)\cdot\left( \frac{1}{N}\sum_{n=1}^{N}\prod_{j=1}^{k}1_{A}(T_{j}^{P_{j}(n)}x)\right)d\mu(x)\ (\eqref{eq4-5})
		\\ = &
		\lim_{N\to\infty}\int_{X}1_{A}(x)\cdot \mu_{x}(A)\cdot\left( \frac{1}{N}\sum_{n=1}^{N}\prod_{j=1}^{k}\E_{\mu}(1_A|\mathcal{Z}_{\infty}(T_j))(T_{j}^{P_{j}(n)}x)\right)d\mu(x)\ (\text{Theorem \ref{thm4-2}}) 
		\\ \ge &
	    \lim_{N\to\infty}\left|\frac{1}{N}\sum_{n=1}^{N}\int_{X}1_{A}(x)\cdot \mu_{x}(A)\cdot \prod_{i=1}^{k}\phi_{i,j_0}(T_{i}^{P_{i}(n)}x)d\mu(x)\right| -
	    \\ & \hspace{0.1cm}
	    \lim_{N\to\infty}\left|\int_{X}1_{A}(x)\cdot \mu_{x}(A)\cdot \frac{1}{N}\sum_{n=1}^{N}\left(\prod_{i=1}^{k}\E_{\mu}(1_A|\mathcal{Z}_{\infty}(T_i))(T_{i}^{P_{i}(n)}x)- \prod_{i=1}^{k}\phi_{i,j_0}(T_{i}^{P_{i}(n)}x)\right)d\mu(x)\right|
	    \\ \ge &	
	    \lim_{N\to\infty}\left|\frac{1}{N}\sum_{n=1}^{N}\int_{X}1_{A}(x)\cdot \mu_{x}(A)\cdot \prod_{i=1}^{k}\phi_{i,j_0}(T_{i}^{P_{i}(n)}x)d\mu(x)\right| - c(\delta)^{3}/32\ (\eqref{eq6-a})
	    \\ = &
	    \int_{X}\mu_{y}(A) \int_{X}1_{A}(x)\cdot \lim_{N\to\infty}\frac{1}{N}\sum_{n=1}^{N}\prod_{i=1}^{k}\phi_{i,j_0}(T_{i}^{P_{i}(n)}x)d\mu_{y}(x)d\mu(y)- c(\delta)^{3}/32
	    \\ = &
	    \int_{X}\mu_{y}(A)G(y)d\mu(y)- c(\delta)^{3}/32
	    \\ \ge &
	    \int_{E}G(y)^{2}d\mu(y)- c(\delta)^{3}/32\ (\text{by the fact that}\ G(y)\le \mu_{y}(A))
	    \\ \ge &
	    \frac{1}{16}c(\delta)^{3}.\ (\eqref{eq4-4})
		\end{align*}
		This finishes the whole proof.
\end{proof}
The proof of Proposition \ref{prop5-2} is similar to one of Proposition \ref{prop5-1}. The only difference is that we should use Theorem \ref{thm4-1}, Theorem \ref{TC} and \eqref{eq4-13} in the proof of Proposition \ref{prop5-2} instead of Theorem \ref{thm4-2}, Theorem \ref{TB} and \eqref{eq4-5}.
\section{Some questions}
\subsection{On Proposition \ref{PA}}
Due to those retrictions for polynomials in Theorem \ref{TB}, we can not give an answer to the following question:
\begin{ques}
	Fix $k\ge 2$ and $A\subset \N^{k+1}$ with positive upper Banach density. Are there $a\in \N^{k+1},d\in \N$ such that $$a,a+d\vec{e}_{1},\ldots,a+kd\vec{e}_{k},a+\Omega(d)\vec{e}_{k+1}\in A\ ?$$
\end{ques} 
So, we ask a special case of the above question here.
\begin{ques}\label{Q1}
	Fix $k\ge 2$. Is it true that for any finite coloring of $\N^{k+1}$, there are $a\in \N^{k+1},d\in \N$ such that the set $$\{a,a+d\vec{e}_{1},\ldots,a+kd\vec{e}_{k},a+\Omega(d)\vec{e}_{k+1}\}$$ is monochromatic ?
\end{ques}
\subsection{On recurrence times}
As a direct result of Theorem \ref{thm5-1} and Proposition \ref{prop5-2}, we know that for any $\delta>0$, there is $c(\delta)>0$, depending only on $\delta$, such that for any $E\subset \N$ with $d^{*}(E)=\delta$, then 
\begin{equation}\label{eq7-1}
	\liminf_{N\to \infty}\frac{1}{N}\sum_{n=1}^{N}d^{*}(E\cap (E-\Omega(n)))\ge c(\delta).
\end{equation}

In \cite{FHB07}, by combining some number theory results and a quantitative version of the Roth theorem, Frantzikinakis, Host and Kra proved that for any $E\subset \N$ with positive upper Banach density, there are infinitely many $n$ in $\mathbb{P}-1(\mathbb{P}+1)$ such that $$d^{*}(E\cap (E-n)\cap (E-2n))>0.$$ Later, Wooley and Ziegler \cite{WT12} extended it to general polynomials with zero constant terms.   
Based on these results and \eqref{eq7-1}, we expect similar result for $\Omega(n)$ here. That is,
\begin{ques}\label{Q2}
	Fix $E\subset \N$ with positive upper Banach density. Are there infinitely many $n$ in $\mathbb{P}-1(\mathbb{P}+1)$ such that
	$$\Omega(n)\in (E-E)\ ?$$
\end{ques}
If Question \ref{Q2} has a positive answer, then by letting $E$ be the arithmetic progressions with infinite length, we have that for each $k\in \N$, there are infinitely many $n$ in $\mathbb{P}-1(\mathbb{P}+1)$ such that $k|\Omega(n)$.

\bibliographystyle{plain}
\bibliography{ref}
\end{document}